\documentclass[12pt]{amsart}

\usepackage{amsmath,amssymb,amsthm}
\usepackage[margin=1in]{geometry}
\usepackage{hyperref}
\usepackage{array}
\usepackage{booktabs}

\newtheorem{theorem}{Theorem}[section]
\newtheorem{lemma}[theorem]{Lemma}
\newtheorem{proposition}[theorem]{Proposition}
\newtheorem{corollary}[theorem]{Corollary}
\theoremstyle{definition}
\newtheorem{definition}[theorem]{Definition}
\newtheorem{example}[theorem]{Example}
\theoremstyle{remark}
\newtheorem{remark}[theorem]{Remark}

\DeclareMathOperator{\Res}{Res}
\DeclareMathOperator{\FP}{FP}
\DeclareMathOperator{\arctanh}{arctanh}

\title[Arctanh sums: analytic continuation and prime restriction]{%
  Arctanh sums: analytic continuation and prime-restricted theory}

\author{Ryan Goulden}
\date{March 2026}

\begin{document}

\begin{abstract}
We study the arctanh sums $h(k)=\sum_{n=2}^{\infty}\arctanh(n^{-k})$ as a function of a complex variable $k$.
Building on the closed-form identity $h(k)=\frac12\log\!\bigl(g(2k)/g(k)^2\bigr)$ (proved in the companion preprint arXiv:2602.06244),
we develop the analytic continuation and prime-restricted multiplicative theory.
We prove that $h$ extends meromorphically to $\Re(k)>0$ with simple poles at $k=1/(2m+1)$, derive Laurent expansions at its poles (including $k=1$),
and obtain a Mittag--Leffler decomposition encoding the Dirichlet lambda function.
We also show that $h$ has exactly one simple real zero in each inter-polar interval.
Finally, for the prime-restricted analogue $h_p(k)=\log\zeta(k)-\frac12\log\zeta(2k)$ we establish a $\pi$-cancellation mechanism implying unconditional transcendence of $h_p(2j)$, and derive a product formula over the nontrivial zeros of $\zeta$ with $O(|\Im(\rho)|^{-2})$ decay.
\end{abstract}

\maketitle

\tableofcontents

%% ===================================================================

%% ===================================================================
\section{Background and notation}\label{sec:background}
%% ===================================================================

This paper is a sequel/companion to the preprint \emph{Closed-Form Evaluation of $\sum_{n=2}^\infty \arctanh(n^{-k})$ via Infinite Products} (arXiv:2602.06244),
which establishes closed-form evaluations of $h(k)$ for integers $k\ge2$ and develops several arithmetic and computational consequences.
Here we focus on Parts~II--III of the three-part program: the meromorphic continuation and zero structure of $h$, and the prime-restricted multiplicative theory.

\
For real $k>1$ define
\begin{equation}\label{eq:hk-def}
h(k):=\sum_{n=2}^{\infty}\arctanh(n^{-k}),
\qquad
g(k):=\prod_{n=2}^{\infty}\left(1-n^{-k}\right).
\end{equation}
The developments below use a small set of identities established in the
companion preprint (Part~I, arXiv:2602.06244); we record them here for convenience.

\begin{theorem}[Dyadic defect identity (from Part~I)]\label{thm:main}
For all integers $k\ge 2$,
\[
h(k)=\sum_{n=2}^{\infty}\arctanh(n^{-k})
=\frac12\log\!\Big(\frac{g(2k)}{g(k)^2}\Big),
\qquad
g(k):=\prod_{n=2}^{\infty}\bigl(1-n^{-k}\bigr).
\]
Equivalently, if
\[
f(k):=\prod_{n=1}^{\infty}\bigl(1+n^{-k}\bigr),
\]
then $f(k)=2g(2k)/g(k)$ and hence
\[
h(k)=\frac12\log\!\Big(\frac{f(k)}{2g(k)}\Big).
\]
\end{theorem}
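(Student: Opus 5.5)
The identity follows termwise from the logarithmic form of $\arctanh$, so the plan is to write each summand as a difference of logarithms and then regroup the sum into the infinite products $g(2k)$ and $g(k)$. For $0<x<1$ we have
\[
\arctanh(x)=\tfrac12\log\frac{1+x}{1-x}=\tfrac12\bigl(\log(1-x^2)-2\log(1-x)\bigr),
\]
using $1+x=(1-x^2)/(1-x)$. Setting $x=n^{-k}$ with $n\ge2$ and $k\ge2$ (in fact any real $k>1$) gives
\[
\arctanh(n^{-k})=\tfrac12\log\bigl(1-n^{-2k}\bigr)-\log\bigl(1-n^{-k}\bigr).
\]

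The next step is to sum over $n\ge2$ and split the sum into two series. This splitting is the only point that needs justification. Since $|\log(1-y)|\le 2y$ for $0\le y\le 1/2$, and $n^{-k}\le 1/4$ for $n\ge2$, both series $\sum_{n\ge2}\log(1-n^{-k})$ and $\sum_{n\ge2}\log(1-n^{-2k})$ converge absolutely, being dominated by a multiple of $\sum n^{-k}<\infty$. Because of this, the products $g(k)$ and $g(2k)$ converge to nonzero limits, and
\[
\log g(k)=\sum_{n\ge2}\log(1-n^{-k}),
\]
obtained by taking logs of the partial products and passing to the limit via continuity of $\log$ on $(0,\infty)$. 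Hence
\[
h(k)=\tfrac12\log g(2k)-\log g(k)=\tfrac12\log\frac{g(2k)}{g(k)^2}.
\]

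For the equivalent form, we factor $1-n^{-2k}=(1-n^{-k})(1+n^{-k})$ termwise over $n\ge2$ in convergent products with nonzero factors. This gives $g(2k)=g(k)\prod_{n\ge2}(1+n^{-k})$. The $n=1$ factor of $f(k)$ equals $2$, so $f(k)=2\prod_{n\ge2}(1+n^{-k})=2g(2k)/g(k)$. Substituting $g(2k)=f(k)g(k)/2$ into the first formula gives
\[
h(k)=\tfrac12\log\frac{f(k)}{2g(k)}.
\]

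No step is a real obstacle. The only care needed is justifying the interchange of the infinite sum with the logarithm of the product, which is handled by the absolute convergence bound above.
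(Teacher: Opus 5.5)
Your proof is correct and complete. The identity $\arctanh(x)=\tfrac12\log(1-x^2)-\log(1-x)$, the absolute-convergence bound justifying $\log g(k)=\sum_{n\ge2}\log(1-n^{-k})$, and the factorization $1-n^{-2k}=(1-n^{-k})(1+n^{-k})$ applied to partial products give both forms of the identity, in fact for all real $k>1$. This paper does not reprove the statement (it is imported from Part~I), but your argument is the same one it gives for the prime analogue in Theorem~\ref{thm:hp}, and you supply the convergence justification that the paper's version leaves implicit.
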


\begin{proposition}[Zeta-series representation]\label{prop:zeta-series}
For all integers $k \geq 2$,
\begin{equation}\label{eq:hk-zeta-prop}
h(k) = \sum_{m=0}^{\infty}\frac{\zeta\bigl((2m+1)k\bigr) - 1}{2m+1}.
\end{equation}
The series converges absolutely with tail bound
$O(2^{-3k})$.
\end{proposition}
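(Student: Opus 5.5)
We expand each term of the defining series \eqref{eq:hk-def} by its Taylor series and then exchange the order of summation. For $|x|<1$ we have $\arctanh(x)=\sum_{m\ge0}x^{2m+1}/(2m+1)$. For $n\ge2$ and $k\ge2$ we have $0<n^{-k}\le 1/4$, so
\[
h(k)=\sum_{n=2}^{\infty}\sum_{m=0}^{\infty}\frac{n^{-(2m+1)k}}{2m+1}.
\]
Every term is nonnegative, so Tonelli's theorem lets us swap the two sums freely. This gives
\[
\sum_{m=0}^{\infty}\frac{1}{2m+1}\sum_{n=2}^{\infty}n^{-(2m+1)k}=\sum_{m=0}^{\infty}\frac{\zeta((2m+1)k)-1}{2m+1}.
\]
The exponent satisfies $(2m+1)k\ge k\ge2>1$, so each inner sum is a convergent zeta tail. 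Because everything is nonnegative, absolute convergence of the double series follows from finiteness of $h(k)$. That finiteness comes from the bound $\arctanh(x)\le x/(1-x^2)\le \tfrac43 x$ for $0\le x\le 1/2$, which gives $h(k)\le \tfrac43(\zeta(k)-1)<\infty$. This argument does not need Theorem~\ref{thm:main} at all, although one could also recover the result from it by expanding $\log g$.

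For the tail bound we estimate the single-variable zeta tail. For $s\ge 2$ we have
\[
\zeta(s)-1=2^{-s}+\sum_{n\ge3}n^{-s}\le 2^{-s}+\int_2^\infty x^{-s}\,dx=2^{-s}\Bigl(1+\tfrac{2}{s-1}\Bigr)\le 3\cdot 2^{-s}.
\]
Hence the terms with $m\ge1$ satisfy
\[
\sum_{m\ge1}\frac{\zeta((2m+1)k)-1}{2m+1}\le \sum_{m\ge1}\frac{3\cdot 2^{-(2m+1)k}}{2m+1}\le 2^{-3k}\cdot\frac{1}{1-2^{-2k}}=O(2^{-3k}).
\]
So truncating after the $m=0$ term $\zeta(k)-1$ leaves an error of $O(2^{-3k})$. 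More generally, truncating at $m=M$ leaves an error of $O\bigl(2^{-(2M+3)k}/(2M+3)\bigr)$. We read the stated tail bound as this estimate on the omitted terms beyond the leading one.

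The only delicate point is the interchange of summations, and nonnegativity disposes of it completely. The tail estimate is then a routine comparison with a geometric series. The remaining care is bookkeeping the constant, $3/(1-2^{-4})$, so that it is uniform in $k\ge2$.
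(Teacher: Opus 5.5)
Your proof is correct and complete. The paper itself gives no proof of this proposition; it imports it from the companion preprint. So the natural in-paper comparison is the product route suggested by Theorem~\ref{thm:main}, which you mention in passing. Writing $\log g(k)=-\sum_{j\ge1}(\zeta(jk)-1)/j$ and forming $\tfrac12\log g(2k)-\log g(k)$, the even-$j$ terms cancel and only odd $j$ survive.

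Your direct route (Taylor expansion of $\arctanh$ plus Tonelli) does not depend on the product identity. More usefully, it runs verbatim for every real $k>1$. Replacing Tonelli by Fubini, with $\sum_{n\ge2}\sum_{m\ge0}|n^{-k}|^{2m+1}/(2m+1)=h(\operatorname{Re}k)<\infty$, it also covers all complex $k$ with $\operatorname{Re}(k)>1$. That is exactly the form in which the series serves as the starting point of the continuation in Theorem~\ref{thm:merocont}. The statement at integers $k\ge2$ alone would need an extra argument (e.g.\ Carlson's theorem) to transfer the identity to an open set, because the integers have no finite accumulation point.

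Your reading of the tail bound as $h(k)-(\zeta(k)-1)=O(2^{-3k})$ matches how it is used in Theorem~\ref{thm:asymp-cancel}.

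One cosmetic slip: after using $3/(2m+1)\le 1$ for $m\ge1$, your chain gives the uniform constant $1/(1-2^{-4})=16/15$, not $3/(1-2^{-4})$. The latter is still a valid bound, just looser.
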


\section{Meromorphic continuation}\label{sec:continuation}

Building on the zeta-series representation from Section~\ref{sec:background}
(Proposition~\ref{prop:zeta-series}), we extend $h$ to a meromorphic
function on $\operatorname{Re}(k) > 0$.

\begin{theorem}[Meromorphic continuation and pole
structure]\label{thm:merocont}
The function $h(k)$, initially defined by the convergent series
\eqref{eq:hk-def} for $\operatorname{Re}(k) > 1$, extends to a
meromorphic function on
$\{\,k \in \mathbb{C} : \operatorname{Re}(k) > 0\,\}$ with the
following properties.
\begin{enumerate}
\item[\emph{(a)}]  $h(k)$ has simple poles at
$k = 1/(2m+1)$ for $m = 0, 1, 2, \ldots$, and no other singularities
in $\operatorname{Re}(k) > 0$.
\item[\emph{(b)}]  The residue at each pole is
$\Res_{k = 1/(2m+1)} h(k) = 1/(2m+1)^2$.
\item[\emph{(c)}]  The sum of all residues is
$\sum_{m=0}^{\infty} (2m+1)^{-2} = \pi^2/8$.
\item[\emph{(d)}]  The poles accumulate at $k = 0$, so that $0$ is a
non-isolated singularity.  In particular, $h$ admits no meromorphic
continuation to any open set containing the origin.
\end{enumerate}
\end{theorem}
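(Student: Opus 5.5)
The plan is to work with the zeta-series representation of Proposition~\ref{prop:zeta-series}. First we extend it from integers to all real, and then complex, $k$ with $\operatorname{Re}(k)>1$. Since $\arctanh(z)=\sum_{m\ge0}z^{2m+1}/(2m+1)$ for $|z|<1$, and $|n^{-k}|=n^{-\operatorname{Re}k}\le 2^{-\operatorname{Re}k}<1$ for $n\ge2$, the double series $\sum_{n\ge2}\sum_{m\ge0} n^{-(2m+1)k}/(2m+1)$ converges absolutely there. Fubini then gives
\[
h(k)=\sum_{m=0}^{\infty}\frac{\zeta((2m+1)k)-1}{2m+1},\qquad \operatorname{Re}(k)>1,
\]
without appealing to the integer case.

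For the continuation, fix a compact set $K\subset\{\operatorname{Re}(k)>0\}$ and let $\sigma_0=\min_K \operatorname{Re}(k)>0$. Choose $M$ with $(2M+1)\sigma_0>2$. Split $h=\sum_{m<M}+\sum_{m\ge M}$. For $m\ge M$ and $k\in K$ we have $\operatorname{Re}((2m+1)k)\ge 2$, hence
\[
|\zeta(s)-1|\le \sum_{n\ge2}n^{-\operatorname{Re}s}\le 2^{-\operatorname{Re}s}\,C
\]
for $\operatorname{Re}s\ge2$, with $C$ absolute (compare with an integral). The tail is therefore bounded by $C\sum_{m\ge M}2^{-(2m+1)\sigma_0}$, and it converges uniformly on $K$ to a holomorphic function by Weierstrass. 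The head is a finite sum of functions $\bigl(\zeta((2m+1)k)-1\bigr)/(2m+1)$. Each is meromorphic on $\mathbb{C}$, with its only pole at $(2m+1)k=1$, i.e.\ at $k=1/(2m+1)$, and that pole is simple. Exhausting $\{\operatorname{Re}k>0\}$ by compacta gives the meromorphic continuation. It agrees with $h$ on $\operatorname{Re}k>1$ by the identity above, so it is unique. The singular set is contained in $\{1/(2m+1)\}$.

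For (a) and (b) we compute residues. Near $k_m=1/(2m+1)$, $\zeta((2m+1)k)=\frac{1}{(2m+1)(k-k_m)}+O(1)$. The points $k_m$ are distinct, so no other term of the series is singular at $k_m$. The residue of $h$ at $k_m$ is therefore $\frac{1}{2m+1}\cdot\frac{1}{2m+1}=(2m+1)^{-2}$. This is nonzero, so each $k_m$ is a genuine simple pole, and there are no other singularities. Part (c) is the classical evaluation $\sum_{m\ge0}(2m+1)^{-2}=(1-2^{-2})\zeta(2)=\pi^2/8$.

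For (d), $k_m\to0$. Suppose $h$ extended meromorphically to an open set $U\ni0$. The poles of a meromorphic function on $U$ form a discrete closed subset of $U$, but infinitely many poles $k_m$ lie in every neighborhood of $0$. If $0$ is itself a pole, it is isolated, which contradicts the $k_m$ accumulating there. If $0$ is a regular point, the function is bounded near $0$, which contradicts $|h(k)|\to\infty$ as $k\to k_m$ for large $m$. Hence $0$ is a non-isolated singularity.

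The main obstacle is the uniform tail estimate near the boundary, which requires choosing $M$ depending on $\sigma_0$. The bound for $|\zeta(s)-1|$ is elementary: $\sum_{n\ge2}n^{-\sigma}\le 2^{-\sigma}+\int_2^\infty x^{-\sigma}\,dx\le 2^{-\sigma}(1+2/(\sigma-1))$, which is at most $3\cdot2^{-\sigma}$ for $\sigma\ge2$.
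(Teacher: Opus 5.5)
Your proof is correct and follows essentially the same route as the paper. Both split the zeta-series into a finite head of terms $(\zeta((2m+1)k)-1)/(2m+1)$, each with a simple pole at $k=1/(2m+1)$ and residue $(2m+1)^{-2}$, plus a tail that is holomorphic on a half-plane reaching toward $\operatorname{Re}(k)=0$ as $M$ grows. You also supply details the paper leaves implicit: the Fubini derivation of the zeta-series for complex $k$ with $\operatorname{Re}(k)>1$ (Proposition~\ref{prop:zeta-series} is stated only for integers), the explicit bound $|\zeta(s)-1|\le 3\cdot 2^{-\operatorname{Re}(s)}$ giving locally uniform convergence of the tail, and the discreteness-of-poles argument for (d).
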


\begin{proof}
For any $M \geq 0$, write
\[
h(k) = \sum_{m=0}^{M}
  \frac{\zeta\bigl((2m+1)k\bigr) - 1}{2m+1} + R_M(k),
\]
where $R_M(k) = \sum_{m > M} [\zeta((2m+1)k) - 1]/(2m+1)$ converges
for $\operatorname{Re}(k) > 1/(2M+3)$.  Each term
$[\zeta((2m+1)k)-1]/(2m+1)$ is meromorphic on
$\operatorname{Re}(k)>0$ with a simple pole at $k = 1/(2m+1)$ inherited
from $\zeta$ at $s=1$.  Taking $M \to \infty$ yields the
meromorphic continuation.

Near $k = 1/(2m+1)$, writing $s = (2m+1)k$ and expanding
$\zeta(s) = (s-1)^{-1} + \gamma + O(s-1)$ gives
\[
\frac{\zeta((2m+1)k) - 1}{2m+1}
= \frac{1}{(2m+1)^2(k - \frac{1}{2m+1})} + O(1),
\]
so $\Res_{k=1/(2m+1)} h(k) = (2m+1)^{-2}$.  The sum of residues is
$\sum (2m+1)^{-2} = \frac{3}{4}\zeta(2) = \pi^2/8$.  Since
$1/(2m+1) \to 0$, the poles accumulate at the origin.
\end{proof}

\section{Laurent expansion at \texorpdfstring{$k=1$}{k=1} and
the regularized value}\label{sec:laurent}

\begin{theorem}\label{thm:laurent}
The Laurent expansion of $h(k)$ at $k = 1$ is
\begin{equation}\label{eq:laurent}
h(k) = \frac{1}{k-1} - \frac{1}{2}\ln 2 + O(k-1).
\end{equation}
In particular, $\FP_{k=1}\, h(k) = -\frac{1}{2}\ln 2$.
\end{theorem}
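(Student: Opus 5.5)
The plan is to work directly from the zeta-series representation of Proposition~\ref{prop:zeta-series}, continued as in the proof of Theorem~\ref{thm:merocont}. Split off the $m=0$ term:
\[
h(k) = \bigl(\zeta(k)-1\bigr) + T(k), \qquad T(k) := \sum_{m\ge1}\frac{\zeta\bigl((2m+1)k\bigr)-1}{2m+1}.
\]
The first term carries the whole singularity. From the standard expansion $\zeta(s) = (s-1)^{-1}+\gamma+O(s-1)$ we get
\[
\zeta(k)-1 = \frac{1}{k-1} + (\gamma - 1) + O(k-1).
\]
So the residue is $1$, consistent with Theorem~\ref{thm:merocont}(b) at $m=0$. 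The finite part is then $\gamma-1+T(1)$.

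Next I will show that $T$ is holomorphic in a neighbourhood of $k=1$, say the disc $|k-1|<1/2$. There $\Re((2m+1)k) \ge 3(1/2)=3/2$ for every $m\ge1$. Using $|\zeta(s)-1| \le \sum_{n\ge2} n^{-\sigma} \le C\,2^{-\sigma}$ for $\sigma\ge 3/2$, with $\sigma = \Re((2m+1)k) \ge (2m+1)/2$, the terms of $T$ are bounded by $C\,2^{-(2m+1)/2}/(2m+1)$. The series therefore converges uniformly on the disc, and by Weierstrass $T$ is holomorphic there. Consequently $h(k) - (k-1)^{-1} - (\gamma-1) - T(1) = O(k-1)$.

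The main step is evaluating $T(1) = \sum_{m\ge1}(\zeta(2m+1)-1)/(2m+1)$ in closed form. I will expand $\zeta(2m+1)-1=\sum_{n\ge2} n^{-(2m+1)}$ and interchange the sums, which is justified since all terms are positive. This gives
\[
T(1) = \sum_{n\ge2}\Bigl(\arctanh(1/n) - \tfrac1n\Bigr),
\qquad
\arctanh(1/n) = \tfrac12\log\tfrac{n+1}{n-1}.
\]
For the partial sums to $N$, the logarithms telescope to $\tfrac12\log\bigl(N(N+1)/2\bigr)$, and the reciprocal sum is $H_N - 1$. Using $H_N = \log N + \gamma + o(1)$, the limit is
\[
T(1) = 1 - \gamma - \tfrac12\log 2.
\]

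Combining the pieces, the constant term is $(\gamma-1) + (1-\gamma-\tfrac12\log2) = -\tfrac12\log 2$. This gives \eqref{eq:laurent} and $\FP_{k=1}h(k) = -\tfrac12\ln 2$. I expect the only delicate point to be this telescoping evaluation and the cancellation of $\gamma$ against the $\zeta$ expansion. The analytic justification, namely uniform convergence of $T$ near $k=1$, is routine.
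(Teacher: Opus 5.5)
Your proof is correct and follows the same route as the paper: split off the $m=0$ term (your $T(k)$ is the paper's $C(k)$), expand $\zeta(k)-1=(k-1)^{-1}+(\gamma-1)+O(k-1)$, and use $C(1)=1-\gamma-\tfrac12\ln 2$ so that $\gamma$ cancels. The only difference is that the paper imports $C(1)$ from Part~I without proof, whereas you derive it yourself from the telescoping sum $\sum_{n\ge2}(\arctanh(1/n)-1/n)$ and also check that $T$ is holomorphic near $k=1$, so your version is self-contained.
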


\begin{proof}
From $h(k) = (\zeta(k) - 1) + C(k)$ with
$\zeta(k) - 1 = (k-1)^{-1} + (\gamma - 1) + O(k-1)$ and
$C(1) = 1 - \gamma - \frac{1}{2}\ln 2$:
\[
h(k) = \frac{1}{k-1} + (\gamma - 1) + (1 - \gamma - \tfrac{1}{2}\ln 2)
  + O(k-1)
= \frac{1}{k-1} - \tfrac{1}{2}\ln 2 + O(k-1).
\]
The $\gamma$-dependent pieces cancel exactly.
\end{proof}

\begin{remark}\label{rem:regularization}
The finite part $-\frac{1}{2}\ln 2$ is the regularized value of
$\sum_{n=2}^{\infty}\arctanh(1/n)$ (a divergent series),
analogous to $\zeta(-1) = -1/12$ for $1 + 2 + 3 + \cdots$.
It is negative despite every summand being positive.
The cancellation of $\gamma$ in the Laurent expansion
reflects the Euler--Mascheroni representation from Part~I (arXiv:2602.06244):
$\gamma$ is encoded in the discrepancy between the finite parts
$\FP_{k=1}(\zeta(k)-1) = \gamma - 1$ and
$\FP_{k=1}\,h(k) = -\frac{1}{2}\ln 2$, with $C(1)$ mediating.
\end{remark}

\section{Evaluation below the convergence
threshold}\label{sec:below}

\begin{example}\label{ex:half}
The continuation gives
$h(1/2) = \sum_{m=0}^{\infty}[\zeta((2m+1)/2)-1]/(2m+1) \approx
-1.8265$,
computed from 200 terms with tail bounded by $O(2^{-M})$.  The
continuation assigns a negative value to the ``series''
$\sum\arctanh(n^{-1/2})$, whose partial sums diverge to $+\infty$.
\end{example}

\section{Uniqueness of real zeros on
\texorpdfstring{$(0,1)$}{(0,1)}}\label{sec:oscillation}

\begin{proposition}\label{prop:oscillation}
On each interval $I_n := (1/(2n+3),\, 1/(2n+1))$ for $n \geq 0$, the function
$h(k)$ is continuous and real-valued, with
$h(k) \to +\infty$ as $k \to (1/(2n+1))^{-}$ and
$h(k) \to -\infty$ as $k \to (1/(2n+3))^{+}$.
In particular, $h$ has at least one zero in every such interval.
\end{proposition}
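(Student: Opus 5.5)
The plan is to work entirely from the meromorphic continuation in Theorem~\ref{thm:merocont}, splitting $h$ near each pole into a singular part and a remainder that stays bounded. Fix $n\ge 0$ and write $k_m:=1/(2m+1)$. The closure $\overline{I_n}=[k_{n+1},k_n]$ contains no pole of $h$ other than its two endpoints, because part~(a) of Theorem~\ref{thm:merocont} says the only singularities in $\Re(k)>0$ are the $k_m$, and these lie strictly outside the open interval. So $h$ is holomorphic on a neighbourhood of $I_n$, and in particular continuous there. It is real-valued on $I_n$ by the following argument. The representation $h(k)=\sum_{m\ge0}[\zeta((2m+1)k)-1]/(2m+1)$ converges locally uniformly on $I_n$ once the finitely many terms with $(2m+1)k\le1$ are split off, as in the proof of Theorem~\ref{thm:merocont}. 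Each term is real for real $k$, since $\zeta$ is real on the real axis away from $s=1$. Hence $h$ is real on $I_n$.

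For the endpoint behaviour, I would isolate the single term of the series that carries each pole. Near $k_n$ write
\[
h(k)=\frac{\zeta((2n+1)k)-1}{2n+1}+H_n(k),
\]
where $H_n$ is holomorphic in a real neighbourhood of $k_n$. This holds because the remaining finitely many terms are holomorphic there and the tail $R_M$ is holomorphic there. Using $\zeta(s)=(s-1)^{-1}+\gamma+O(s-1)$ with $s=(2n+1)k$, part~(b) of Theorem~\ref{thm:merocont} gives
\[
h(k)=\frac{(2n+1)^{-2}}{k-k_n}+O(1)
\]
as $k\to k_n$. The bounded part stays bounded on a one-sided neighbourhood, so the one-sided limit of $h$ at $k_n$ is an infinity whose sign is the sign of $(2n+1)^{-2}(k-k_n)^{-1}$. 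The identical decomposition at the left endpoint $k_{n+1}$, using the term with index $n+1$ and residue $(2n+3)^{-2}$, gives $h(k)=(2n+3)^{-2}/(k-k_{n+1})+O(1)$.

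The main obstacle is exactly this sign bookkeeping at each endpoint, and I will have to do it with care. The residue is positive in both cases. So the sign of the blow-up is dictated by whether $k$ approaches the pole from below or from above, that is, by the sign of $k-k_n$ and of $k-k_{n+1}$. The two endpoints of $I_n$ are approached from opposite sides: $k$ lies below the right endpoint $k_n$ and above the left endpoint $k_{n+1}$. Therefore the two one-sided limits are infinities of opposite sign.

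In checking the details I must make sure the signs match the statement as worded. From the expansion, $k<k_n$ makes $k-k_n<0$, and $k>k_{n+1}$ makes $k-k_{n+1}>0$. Read literally, this points the other way from the orientation in the statement, so this is the place where the argument has to be rechecked against the convention used for $h$ and for the residues. The zero-existence conclusion depends only on the two limits being infinite with opposite signs, and that part is robust.

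Finally, $h$ is continuous on $I_n$ and unbounded above near one endpoint and below near the other. So it takes both positive and negative values on $I_n$, and the intermediate value theorem yields at least one zero in every $I_n$.
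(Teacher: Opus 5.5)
Your argument is correct and is essentially the paper's. Near each endpoint the principal part $(2m+1)^{-2}/(k-1/(2m+1))$ dominates a remainder that is bounded on a one-sided neighbourhood, and the intermediate value theorem gives the zero. The paper takes its remainder $\phi$ from the later Mittag-Leffler decomposition, while you isolate the single term $[\zeta((2m+1)k)-1]/(2m+1)$; this is the same idea and slightly more self-contained.

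The sign discrepancy you flagged is real, and you should assert it rather than defer to a ``convention''. Because the residues are positive, $h\to-\infty$ as $k\to(1/(2n+1))^-$ and $h\to+\infty$ as $k\to(1/(2n+3))^+$, which is the reverse of the stated limits. The paper itself confirms this in three places: the expansion $h(k)=1/(k-1)-\tfrac12\ln 2+O(k-1)$ (Theorem~\ref{thm:laurent}), the bound $h'<0$ on $I_n$ (Theorem~\ref{thm:unique-zeros}, which is incompatible with the stated orientation), and Remark~\ref{rem:contrast}. The paper's one-line proof never checks signs. So what both arguments actually establish is the corrected limits, together with the existence of a zero, which is unaffected.
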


\begin{proof}
The principal part $1/((2n+1)^2(k - 1/(2n+1)))$ dominates the bounded
holomorphic remainder $\phi(k)$ from
Theorem~\ref{thm:mittagleffler} near each pole.
The intermediate value theorem gives the result.
\end{proof}

We now strengthen this to exact uniqueness.

\begin{theorem}[Uniqueness of zeros]\label{thm:unique-zeros}
For each $n \geq 0$, the function $h$ has exactly one zero in $I_n$, and
this zero is simple.
\end{theorem}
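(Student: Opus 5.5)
The plan is to show that $h$ is strictly decreasing on each $I_n$. Combined with the boundary behaviour at the two poles, this gives exactly one zero. Simplicity then follows because $h'$ does not vanish there.

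\textbf{Step 1 (termwise differentiation).} On $I_n$ we use the continuation from Theorem~\ref{thm:merocont},
\[
h(k)=\sum_{m=0}^{\infty}\frac{\zeta((2m+1)k)-1}{2m+1}.
\]
Fix a compact subinterval $[a,b]\subset I_n$. For $m$ large, $(2m+1)k\ge (2m+1)a$ is large, and $|\zeta'(s)|\le\sum_{j\ge2}(\log j)j^{-s}=O(2^{-s}\log 2)$ for real $s\ge 2$. Hence the differentiated series converges uniformly on $[a,b]$, and
\[
h'(k)=\sum_{m=0}^{\infty}\zeta'\bigl((2m+1)k\bigr),\qquad k\in I_n .
\]
Here the factor $2m+1$ from the chain rule cancels the denominator. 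For $k\in I_n$ none of the arguments $(2m+1)k$ equals $1$. The arguments with $m\le n$ lie in $(0,1)$, and those with $m\ge n+1$ lie in $(1,\infty)$.

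\textbf{Step 2 (sign of $\zeta'$ on the positive real axis).} We need $\zeta'(s)<0$ for every real $s\in(0,1)\cup(1,\infty)$.

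For $s>1$ this is immediate from $\zeta'(s)=-\sum_{j\ge2}(\log j)j^{-s}$.

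For $0<s<1$ we must show that $\zeta$ decreases from $-1/2$ (at $s=0$) to $-\infty$ (as $s\to1^-$) without any turning point. This is a classical fact, and we could simply cite it. To keep the argument self-contained, I would write $\zeta(s)=\eta(s)/(1-2^{1-s})$, where $\eta(s)=\sum(-1)^{j-1}j^{-s}>0$. On $(0,1)$ we have $1-2^{1-s}<0$, and this quantity increases to $0$. Differentiating the quotient, $\zeta'<0$ reduces to the inequality
\[
\eta'(s)(1-2^{1-s})+\eta(s)\,2^{1-s}\log 2<0 .
\]
Since the second term is positive, this needs a bound on $\eta'$, obtained by grouping the alternating series in pairs $\bigl(\log(2j)(2j)^{-s}-\log(2j-1)(2j-1)^{-s}\bigr)$. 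An alternative is the integral representation $\zeta(s)=-s\int_0^\infty\{x\}x^{-s-1}\,dx$, valid for $0<s<1$, which gives $\zeta'(s)=\zeta(s)/s+s\int_0^\infty\{x\}(\log x)\,x^{-s-1}\,dx$. The real work is then bounding the last integral against $|\zeta(s)|/s$.

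\textbf{This sign check on $(0,1)$ is the step I expect to be the main obstacle.} If a clean self-contained proof resists, I will invoke the known monotonicity of $\zeta$ on $(0,1)$.

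\textbf{Step 3 (conclusion).} By Steps 1--2, every term of $h'(k)$ is negative, so $h'(k)<0$ on all of $I_n$ and $h$ is strictly decreasing there.

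The boundary limits come from the principal parts, whose residues are positive by Theorem~\ref{thm:merocont}(b). As $k\to(1/(2n+3))^+$, the term $1/((2n+3)^2(k-1/(2n+3)))$ tends to $+\infty$. As $k\to(1/(2n+1))^-$, the corresponding term tends to $-\infty$. The rest of $h$ stays bounded near each endpoint. The orientation in Proposition~\ref{prop:oscillation} should therefore be read with these signs; only the opposite signs at the two ends matter for existence.

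A continuous, strictly decreasing function running from $+\infty$ to $-\infty$ has exactly one zero. Since $h'\neq0$ there, that zero is simple.
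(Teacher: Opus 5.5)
Your proof is correct, and it takes a genuinely different and considerably simpler route than the paper.

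The paper writes $h'=h'_{\mathrm{polar}}+\phi'$ and proves $|h'_{\mathrm{polar}}|>|\phi'|$ on $I_n$. It does this by pitting a two-pole Lagrange lower bound (Lemma~\ref{lem:polarlower}) against an Euler--Maclaurin plus geometric-tail upper bound for $\phi'=\sum_m F'((2m+1)k)$ (Lemma~\ref{lem:phiprime}). You instead differentiate the zeta series termwise and note that every summand $\zeta'((2m+1)k)$ is already negative, so no comparison of magnitudes is needed.

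The step you flag as the main obstacle closes in one line with the paper's own estimate \eqref{eq:Fprime-EM-bound}. For $0<s<1$ it gives $|F'(s)|<\tfrac14$, while $(s-1)^{-2}>1$, so $\zeta'(s)=F'(s)-(s-1)^{-2}<-\tfrac34$. Your $\eta$-reduction also has a sign slip: the quotient rule gives $\eta'(s)(1-2^{1-s})-\eta(s)\,2^{1-s}\log 2<0$, so the $\eta$-term is negative, not positive.

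Your route is exact and uses no numerical constants. It also avoids the weakest point of the paper's argument. The large-argument bound in Lemma~\ref{lem:phiprime} relies on $(s-1)^{-2}\le 2^{1-s}$ for $s\ge 3$, which fails for $s>5$, because $F'(s)\sim(s-1)^{-2}$ decays only polynomially. The paper's conclusion survives, since the true tail is still $O(1/k)=O(n)$, but the displayed bound does not.

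The paper's decomposition does supply the quantitative bound $|h'|\gtrsim n^2$ used in Corollary~\ref{cor:zeroasympt}. Your same-sign structure gives this too. All terms are negative, so $|h'(k)|\ge|\zeta'((2n+1)k)|+|\zeta'((2n+3)k)|$. Combined with $|\zeta'(s)|\ge(s-1)^{-2}-\tfrac13$ for real $s>0$, this reproduces the two-pole bound up to an additive $\tfrac23$.

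Your endpoint orientation is the correct one: $+\infty$ as $k\to(1/(2n+3))^+$ and $-\infty$ as $k\to(1/(2n+1))^-$. Proposition~\ref{prop:oscillation} as printed has these signs reversed, which would contradict $h'<0$.
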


The proof shows that $h'(k) < 0$ on each $I_n$ by establishing that the
purely negative polar derivative $h_{\mathrm{polar}}'$ dominates the
holomorphic remainder $\phi'$ in absolute value.

\begin{lemma}[Negativity of the polar derivative]\label{lem:polarsign}
For every $k \in (0,\infty) \setminus \{1/(2m+1) : m \geq 0\}$,
\[
h_{\mathrm{polar}}'(k) = -\sum_{m=0}^{\infty}
  \frac{(2m+1)^{-2}}{(k - 1/(2m+1))^2} < 0.
\]
\end{lemma}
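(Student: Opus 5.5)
The plan is to take $h_{\mathrm{polar}}$ to be the sum of principal parts from the Mittag--Leffler decomposition (Theorem~\ref{thm:mittagleffler}, which the proof of Proposition~\ref{prop:oscillation} already uses):
\[
h_{\mathrm{polar}}(k)=\sum_{m=0}^{\infty}\frac{(2m+1)^{-2}}{k-1/(2m+1)},
\]
and then differentiate it term by term. The formula in the lemma is exactly what termwise differentiation gives, so the real content of the proof is (i) justifying termwise differentiation and (ii) showing that the resulting series of negative terms converges to a strictly negative number.

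First, fix a compact set $K\subset(0,\infty)\setminus\{1/(2m+1)\}$. Let $\delta=\min K>0$. Since $1/(2m+1)\to 0$, for all $m$ with $1/(2m+1)<\delta/2$ we have $|k-1/(2m+1)|\ge \delta/2$ on $K$. Finitely many $m$ remain, and for each of these $|k-1/(2m+1)|$ is bounded below on $K$ because $K$ avoids the poles. This gives two uniform bounds on $K$:
\[
\Bigl|\frac{(2m+1)^{-2}}{k-1/(2m+1)}\Bigr|\le \frac{2}{\delta}(2m+1)^{-2},
\qquad
\frac{(2m+1)^{-2}}{(k-1/(2m+1))^{2}}\le \frac{4}{\delta^{2}}(2m+1)^{-2},
\]
the first for all but finitely many $m$, the second likewise, with the finitely many exceptional terms bounded separately. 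Since $\sum(2m+1)^{-2}=\pi^2/8$ by Theorem~\ref{thm:merocont}(c), both the series of principal parts and the series of their derivatives converge uniformly on $K$ by the Weierstrass M-test. The standard theorem on termwise differentiation of uniformly convergent series then gives
\[
h_{\mathrm{polar}}'(k)=\sum_m \frac{d}{dk}\,\frac{(2m+1)^{-2}}{k-1/(2m+1)}=-\sum_m\frac{(2m+1)^{-2}}{(k-1/(2m+1))^2}.
\]
Since $K$ was an arbitrary compact subset, this holds at every point of the domain.

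Second, for real $k$ every term $(2m+1)^{-2}/(k-1/(2m+1))^2$ is strictly positive, and the series converges by the bound above. The sum is therefore at least its $m=0$ term, $(k-1)^{-2}>0$. Hence $h_{\mathrm{polar}}'(k)<0$.

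The only delicate step is the uniformity near $k\to0$. There the poles accumulate, so no single bound works on sets approaching $0$. Working on compact subsets of $(0,\infty)$ handles this, since such a set has a positive lower bound $\delta$. If Theorem~\ref{thm:mittagleffler} instead writes the polar part with convergence-producing subtractions (for example, terms of the form $(2m+1)^{-2}/(k-1/(2m+1)) + c_m$ with constants $c_m$), nothing changes: the constants vanish on differentiation, and the same M-test argument applies.
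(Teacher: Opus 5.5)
Your proof is correct and uses the same argument as the paper, whose entire proof is the observation that each differentiated term $-(2m+1)^{-2}/(k-1/(2m+1))^2$ is strictly negative. Your Weierstrass M-test justification of termwise differentiation on compact subsets avoiding the poles, together with the lower bound by the $m=0$ term, supplies rigor the paper leaves implicit; the hedge about convergence-producing constants is unnecessary, since $\sum_m (2m+1)^{-2}/(k-1/(2m+1))$ already converges for every fixed $k>0$.
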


\begin{proof}
Every residue $(2m+1)^{-2}$ is positive and every squared denominator is
positive, so each term is strictly negative.
\end{proof}

\begin{lemma}[Lower bound on $|h_{\mathrm{polar}}'|$ on $I_n$]
\label{lem:polarlower}
Write $k_n = 1/(2n+1)$, $k_{n+1} = 1/(2n+3)$, $r_n = (2n+1)^{-2}$,
$r_{n+1} = (2n+3)^{-2}$, and $\ell_n := |I_n| = 2/((2n+1)(2n+3))$.
Then
\begin{equation}\label{eq:polarmin}
\min_{k \in I_n} |h_{\mathrm{polar}}'(k)|
\;\geq\; \frac{(r_n^{1/3} + r_{n+1}^{1/3})^3}{\ell_n^2}\,.
\end{equation}
For $n \geq 1$ this gives $\min_{I_n} |h_{\mathrm{polar}}'| \geq 2(2n+1)^2$.
For $n = 0$: $\min_{I_0} |h_{\mathrm{polar}}'| \geq 13/2$.
\end{lemma}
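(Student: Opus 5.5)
Drop all terms of the polar sum except the two adjacent poles $k_n$ and $k_{n+1}$; every term is negative by Lemma~\ref{lem:polarsign}, so this only decreases $|h_{\mathrm{polar}}'|$. For $k\in I_n$ write $x=k_n-k$ and $y=k-k_{n+1}$, so $x,y>0$ and $x+y=\ell_n$. Then
\[
|h_{\mathrm{polar}}'(k)| \;\ge\; F(x,y):=\frac{r_n}{x^2}+\frac{r_{n+1}}{y^2}.
\]
The problem becomes minimizing $F$ on the segment $x+y=\ell_n$, $x,y>0$.

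For the minimization, the first route is Hölder's inequality in the form $(a+b)^3\le (r_n/x^2+r_{n+1}/y^2)(x+y)^2$ with $a=r_n^{1/3}$, $b=r_{n+1}^{1/3}$. This is the exponent triple $(3,3/2,\dots)$ version: write $r_n^{1/3}=(r_n/x^2)^{1/3}x^{2/3}$ and apply Hölder with exponents $3$ and $3/2$. It gives
\[
F\ge \frac{(r_n^{1/3}+r_{n+1}^{1/3})^3}{\ell_n^2},
\]
which is exactly \eqref{eq:polarmin}. The same bound also follows from Lagrange multipliers: the critical point satisfies $x\propto r_n^{1/3}$, and $F\to\infty$ at both ends of the segment.

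For the explicit constants, substitute $r_n^{1/3}=(2n+1)^{-2/3}$, $r_{n+1}^{1/3}=(2n+3)^{-2/3}$ and $\ell_n^{-2}=(2n+1)^2(2n+3)^2/4$. For $n\ge1$ the estimate is cruder. Use $(a+b)^3\ge 8\min(a,b)^3=8(2n+3)^{-2}$, which gives the bound $8(2n+3)^{-2}\cdot(2n+1)^2(2n+3)^2/4=2(2n+1)^2$, valid for every $n$. For $n=0$ compute directly: $a=1$, $b=9^{-1/3}\approx0.4807$, so $(a+b)^3\approx3.245$, and $\ell_0=2/3$ gives $\ell_0^{-2}=9/4$. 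The product is $\approx7.30\ge13/2$. To make this rigorous, check that $b\ge0.48$, that is $0.48^3\cdot 9\le1$ ($0.995$), so $(1.48)^3\cdot 9/4\approx 7.29>6.5$.

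The main obstacle is the Hölder/Lagrange step, specifically getting the exponents right so that the cube-root form appears. The rest is arithmetic. Because all the discarded terms have a fixed sign, no interaction with the holomorphic remainder is needed.
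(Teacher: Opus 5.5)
Your proposal is correct and follows essentially the same route as the paper: discard all but the two adjacent polar terms (justified by the sign in Lemma~\ref{lem:polarsign}), minimize $r_n/x^2 + r_{n+1}/y^2$ on $x+y=\ell_n$ to obtain the cube-root bound, then use $(a+b)^3 \ge 8\min(a,b)^3$ for $n\ge 1$ and a direct numerical check for $n=0$. Your H\"older argument is a slightly cleaner way than the paper's Lagrange-multiplier computation to certify the minimum, since it gives the lower bound for every point of the segment without separately checking that the critical point is the minimizer; your check $0.48^3\cdot 9<1$ also makes the $n=0$ constant rigorous.
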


\begin{proof}
Retaining only the two adjacent poles, set $d_R = k_n - k$ and
$d_L = k - k_{n+1}$, so $d_R + d_L = \ell_n$ and
$|h_{\mathrm{polar}}'(k)| \geq r_n/d_R^2 + r_{n+1}/d_L^2$.
Minimizing by Lagrange multipliers gives the critical point at
$d_R/d_L = (r_n/r_{n+1})^{1/3}$ with minimum value
$(r_n^{1/3} + r_{n+1}^{1/3})^3/\ell_n^2$.

For $n \geq 1$: $r_n^{1/3} + r_{n+1}^{1/3} \geq 2(2n+3)^{-2/3}$, giving
$8(2n+3)^{-2}/[4/((2n+1)(2n+3))^2] = 2(2n+1)^2$.

For $n = 0$: $r_0 = 1$, $r_1 = 1/9$, $\ell_0 = 2/3$.
Then $(1 + 9^{-1/3})^3/(4/9) \geq 7.30 > 13/2$.
\end{proof}

\begin{lemma}[Upper bound on $|\phi'|$ on $I_n$]\label{lem:phiprime}
Define $F(s) := \zeta(s) - 1 - 1/(s-1)$, which is entire.  For
$k \in I_n$:
\begin{equation}\label{eq:phiprime-bound}
|\phi'(k)| \le \frac{5(2n+3)}{18} + (2\ln 2+2)\frac{2^{-2}}{1-2^{-2k}}.
\end{equation}
\end{lemma}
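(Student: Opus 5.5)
We first identify $\phi$ from the Mittag--Leffler decomposition of Theorem~\ref{thm:mittagleffler}. Writing each zeta term with $F$, for $\Re(k)>0$ away from the poles we have
\[
\frac{\zeta((2m+1)k)-1}{2m+1}=\frac{F((2m+1)k)}{2m+1}+\frac{1}{(2m+1)((2m+1)k-1)}
=\frac{F((2m+1)k)}{2m+1}+\frac{(2m+1)^{-2}}{k-1/(2m+1)}.
\]
This gives $h=h_{\mathrm{polar}}+\phi$ with $\phi(k)=\sum_{m\ge0}F((2m+1)k)/(2m+1)$. The series must be understood with the convergent splitting. For large $(2m+1)k$ we have $F(s)=(\zeta(s)-1)-1/(s-1)$, so the $1/(s-1)$ pieces give $\sum 1/((2m+1)((2m+1)k-1))$, which converges. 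We therefore differentiate term by term:
\[
\phi'(k)=\sum_{m\ge0}F'((2m+1)k).
\]
We then split the sum at the threshold where $(2m+1)k$ exceeds roughly $2$.

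\textbf{Small-argument terms.} On $I_n$, $(2m+1)k\le 1$ exactly when $m\le n$; add the first term beyond that. So $m\le n+1$ gives $n+2$ terms with $s=(2m+1)k\in(0,(2n+3)/(2n+1))\subset(0,3]$. Here we need a uniform bound $|F'(s)|\le c$ on $[0,3]$. Since $F'(s)=\zeta'(s)+(s-1)^{-2}$ is smooth with $F'(1)=-\gamma_1\approx0.0728$, and $F'(0)=\zeta'(0)+1=1-\tfrac12\ln(2\pi)\approx0.081$, a constant such as $c=5/18$ should be attainable. I would check this by the Euler--Maclaurin representation
\[
F(s)=\tfrac12-s\int_1^\infty \{x\}x^{-s-1}\,dx-1+\ldots,
\]
differentiating under the integral, or simply by a monotonicity argument. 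Summing $n+2$ terms then gives at most $\frac{5}{18}(n+2)\le \frac{5(2n+3)}{18}$, which is the first term of \eqref{eq:phiprime-bound}.

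\textbf{Large-argument terms.} For $m\ge n+2$ we have $s=(2m+1)k>1$, indeed $s\ge (2n+5)/(2n+3)$. Here we use $F'(s)=\zeta'(s)+(s-1)^{-2}$ and bound $|\zeta'(s)|=\sum_{j\ge2}\ln j\, j^{-s}$, which is dominated by a geometric-type bound $(2\ln2+\ldots)2^{-s}$. The $(s-1)^{-2}$ part needs care, since it is not small near $s\approx1$. I expect the intended route is to compare against $\sum_m 2^{-(2m+1)k}$, i.e.
\[
\sum_m 2^{-(2m+1)k}=\frac{2^{-k}}{1-2^{-2k}},
\]
with the prefactor $2\ln2+2$ absorbing $\ln 2$ from the $j=2$ term, the remaining $j\ge3$ terms, and the $(s-1)^{-2}$ contribution, together with the stated $2^{-2}$.

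\textbf{Main obstacle.} The main obstacle is this tail. The terms $(s-1)^{-2}$ decay only polynomially in $m$, so they cannot literally be bounded by $2^{-s}$. I would first try regrouping, so that the tail uses $F'$ directly rather than $\zeta'$ and $(s-1)^{-2}$ separately. Alternatively, I would move the polynomially decaying parts into the first-term count, adjusting the split point so that in the tail $s\ge2$ and the remaining $F'$ pieces are controlled via $|F'(s)|\le |\zeta'(s)|+(s-1)^{-2}$ with $(s-1)^{-2}\le 2\cdot 2^{-s}\cdot(\ldots)$ only over a finite range. Everything else is a finite numerical verification.
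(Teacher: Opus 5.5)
\textbf{Verdict: there is a genuine gap.} The large-argument terms are never actually bounded, and none of the routes you sketch closes the gap.

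\textbf{The small-argument part is fine once made rigorous.} The Euler--Maclaurin estimate $|F'(s)|\le \frac{1}{3}-\frac{1}{6(s+1)}$ gives less than $7/24$ on $(0,3)$. Since $\frac{7}{24}(n+2)\le\frac{5(2n+3)}{18}$, your count of $n+2$ terms works.

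\textbf{Why your tail routes fail.} With your split, the tail starts at $s=(2n+5)k$, which comes arbitrarily close to $1+\frac{2}{2n+3}$. There $|\zeta'(s)|$ and $(s-1)^{-2}$ are both about $(2n+3)^2/4$ and cancel inside $F'$. Estimating them separately therefore produces a term of order $n^2$, far above the $O(n)$ right-hand side. The paper avoids this by putting every term with $s\le 2$ into the uniformly bounded group; that is why its first term counts $2n+3$ terms rather than $n+2$.

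Beyond $s=2$, a termwise bound $|F'(s)|\le C\,2^{-s}$ followed by a geometric sum is impossible, for exactly the reason you give: $F'(s)=(s-1)^{-2}+O(2^{-s})$. This is also where the paper's own argument breaks. Its claim $(s-1)^{-2}\le 2^{1-s}$ for $s\ge 3$ fails for every $s>5$. For example, $F'(10)\approx 0.0116$ exceeds $(2\ln 2+2)2^{-10}\approx 0.0033$, so the paper's geometric tail sum is not justified.

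Your fallback of controlling the polynomial pieces only over a finite range does not help either. Those pieces persist for all $m$ and add up to roughly $\frac{1}{2k}\int_2^\infty (s-1)^{-2}\,ds=\frac{1}{2k}$, which grows like $n$. So this is not a finite numerical verification.

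\textbf{The missing observation.} The second term on the right-hand side is not $O(1)$. Since $1-2^{-2k}\le 2k\ln 2$, that term is at least $\frac{1+\ln 2}{4k\ln 2}\approx\frac{0.61}{k}$. The tail therefore only needs a bound of the form $c/k+O(1)$ with $c$ below about $0.61$. For comparison, the true size is $\phi'(k)\approx\frac{1}{2k}\int_0^\infty F'(s)\,ds=\frac{1}{4k}$, because $F(0)=-\frac{1}{2}$ and $F(s)\to 0$ as $s\to\infty$.

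\textbf{A route that works.}
\begin{enumerate}
\item For $s\ge 2$, we have $\zeta'(s)<0$ and $|\zeta'(s)|\le \frac{\ln 2}{2^s}+\int_1^\infty \frac{\ln x}{x^s}\,dx\le \frac{2}{(s-1)^2}$, so $|F'(s)|\le (s-1)^{-2}$. This keeps the true polynomial decay instead of pretending it is exponential.
\item Compare the sum over the progression $s_m=(2m+1)k$, which has step $2k$, with an integral. This gives $\sum_{s_m>2}(s_m-1)^{-2}\le 1+\frac{1}{2k}$.
\item This bound is at most $\frac{0.61}{k}$ once $k\le 0.11$.
\item The remaining few intervals $I_n$ must be checked directly. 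When some $s_m$ lies just above $2$, use the unused part $\frac{5}{18}\bigl(2n+3-\#\{m:(2m+1)k\le 2\}\bigr)$ of the first term, or sharper values of $F'$ near $s=2$ (in fact $F'(2)\approx 0.06$).
\end{enumerate}
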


\begin{proof}
From Proposition~\ref{prop:phi},
$\phi'(k) = \sum_{m=0}^{\infty} F'((2m+1)k)$.
We split at the threshold $(2m+1)k = 2$.

\smallskip\noindent
\textit{Small terms: $(2m+1)k \leq 2$.}\;
We use the second-order Euler--Maclaurin formula
(see, e.g., Ivi\'c~\cite{ivic}, Appendix): for real $s > 0$,
\[
\zeta(s) = \frac{1}{s-1} + \frac{1}{2} + \frac{s}{12}
  - \frac{s(s+1)}{2}\int_1^{\infty}B_2(\{x\})\,x^{-s-2}\,dx,
\]
where $B_2(t) = t^2 - t + 1/6$ satisfies $|B_2(t)| \leq 1/6$ for
$t \in [0,1]$.  Differentiating $F(s) = \zeta(s) - 1 - 1/(s-1)$ and
bounding the remainder integrals using $|B_2| \leq 1/6$,
$\int_1^{\infty}x^{-s-2}\,dx = 1/(s+1)$, and
$\int_1^{\infty}(\ln x)\,x^{-s-2}\,dx = 1/(s+1)^2$, we obtain
\begin{equation}\label{eq:Fprime-EM-bound}
|F'(s)| \leq \frac{1}{12} + \frac{3s+1}{12(s+1)}
  = \frac{1}{3} - \frac{1}{6(s+1)}.
\end{equation}
This bound is increasing in $s$.  For $s \in (0, 2]$:
\begin{equation}\label{eq:M-on-02}
|F'(s)| \leq \frac{1}{3} - \frac{1}{18} = \frac{5}{18}
  \approx 0.278.
\end{equation}
The terms with $(2m+1)k \leq 2$ number at most
$\lceil 1/k \rceil \leq 2n+3$ (since $k \geq 1/(2n+3)$), each with
argument in $(0, 2]$.  Their total contribution is at most
$5(2n+3)/18$.

\smallskip\noindent
\textit{Large terms: $(2m+1)k>2$.}\;
For $s>2$ we use the integral comparison
\[
\sum_{n=2}^{\infty}\frac{\ln n}{n^{s}}
\le
\frac{\ln 2}{2^{s}}+\int_{2}^{\infty}\frac{\ln x}{x^{s}}\,dx
=
\frac{\ln 2}{2^{s}}
+\frac{2^{1-s}\big((s-1)\ln 2+1\big)}{(s-1)^{2}}.
\]
Hence for $s\ge 3$ we obtain the explicit bound
\[
|\zeta'(s)|
=
\sum_{n=2}^{\infty}\frac{\ln n}{n^{s}}
\le
2^{-s}\!\left(\ln 2+\frac{(s-1)\ln 2+1}{(s-1)^2}\right)
\le
2(\ln 2)\,2^{-s}.
\]
Moreover, for $s\ge 3$ we have $(s-1)^{-2}\le 2^{1-s}$, so
\[
|F'(s)|=|\zeta'(s)+(s-1)^{-2}|
\le 2(\ln 2)\,2^{-s}+2^{1-s}
\le \bigl(2\ln 2+2\bigr)2^{-s}.
\]
Summing the geometric tail gives
\[
\sum_{\substack{m\ge0\\ (2m+1)k>2}} |F'((2m+1)k)|
\le (2\ln 2+2)\cdot \frac{2^{-2}}{1-2^{-2k}}
\]

Combining gives~\eqref{eq:phiprime-bound}.
\end{proof}

\begin{proof}[Proof of Theorem~\ref{thm:unique-zeros}]
On each $I_n$, the sign change gives at least one zero
(Proposition~\ref{prop:oscillation}).

\smallskip\noindent
\textit{Case $n \geq 1$.}\;
By Lemma~\ref{lem:polarlower} we have $|h_{\mathrm{polar}}'(k)| \geq 2(2n+1)^2$
for all $k\in I_n$.
By Lemma~\ref{lem:phiprime},
\[
|\phi'(k)| \leq \frac{5(2n+3)}{18} + (2\ln 2+2)\frac{2^{-2}}{1-2^{-2k}}
\qquad (k\in I_n).
\]
Since $k\in I_n$ implies $k\ge \frac{1}{2n+3}$, we have
$2^{-2k}\le 2^{-2/(2n+3)}$, hence
\[
\frac{1}{1-2^{-2k}} \le \frac{1}{1-2^{-2/(2n+3)}}.
\]
Therefore, for $k\in I_n$,
\[
|\phi'(k)|
\le \frac{5(2n+3)}{18} + (2\ln 2+2)\frac{2^{-2}}{1-2^{-2/(2n+3)}}.
\]
The right-hand side is $O(n)$ as $n\to\infty$, while $2(2n+1)^2\asymp n^2$.
Thus $|h'_{\mathrm{polar}}(k)|>|\phi'(k)|$ on $I_n$ for all $n\ge 1$.
Since $h'_{\mathrm{polar}}(k)<0$, it follows that
\[
h'(k)=h'_{\mathrm{polar}}(k)+\phi'(k)<0\qquad (k\in I_n),
\]
so $h$ is strictly decreasing on $I_n$.

\smallskip\noindent
\textit{Case $n = 0$.}\;
By Lemma~\ref{lem:polarlower},
$\min_{I_0}|h_{\mathrm{polar}}'| \geq 13/2 = 6.5$.
By Lemma~\ref{lem:phiprime}, for $k\in I_0$,
\[
|\phi'(k)| \leq \frac{5\cdot 3}{18} + (2\ln 2+2)\frac{2^{-2}}{1-2^{-2k}}.
\]
Since $k\ge 1/3$ on $I_0$, we have $2^{-2k}\le 2^{-2/3}$, hence
\[
|\phi'(k)| \leq \frac{5}{6} + (2\ln 2+2)\frac{2^{-2}}{1-2^{-2/3}}
\approx 3.12 < 6.5.
\]
Thus again $|h'_{\mathrm{polar}}|>|\phi'|$ on $I_0$, and therefore $h'(k)<0$ on $I_0$.

\smallskip
In all cases, $h'<0$ on $I_n$, so $h$ is strictly decreasing and has
exactly one zero on $I_n$, which is necessarily simple.
\end{proof}

\begin{corollary}[Asymptotics of the zeros]\label{cor:zeroasympt}
Let $z_n$ denote the unique zero of $h$ in $I_n$.  Then
\begin{equation}\label{eq:zero-asympt}
z_n = \frac{1}{2n+2} + O(n^{-3}) \quad \text{as } n \to \infty,
\end{equation}
and $\zeta(z_n) = -\frac{1}{2} - \frac{\ln(2\pi)}{4n} + O(n^{-2})$.
\end{corollary}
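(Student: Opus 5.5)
The plan is to turn $h(k)=0$ on $I_n$ into an explicit trigonometric equation in the variable $x:=1/(2k)$, which ranges over $(n+\tfrac12,\,n+\tfrac32)$ on $I_n$, and then solve it perturbatively. I would use the Mittag--Leffler splitting $h=h_{\mathrm{polar}}+\phi$ from Theorem~\ref{thm:mittagleffler} and Proposition~\ref{prop:phi}, with $\phi(k)=\sum_m F((2m+1)k)/(2m+1)$ and $F$ as in Lemma~\ref{lem:phiprime}.

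\textbf{Step 1 (polar part via digamma).} Write
\[
\frac{r_m}{k-k_m}=\frac{1}{(2m+1)\bigl((2m+1)k-1\bigr)}=\frac{k}{(2m+1)k-1}-\frac{1}{2m+1}.
\]
Summing over $m$ with the convergence-producing grouping should express $h_{\mathrm{polar}}$ through $\psi(\tfrac12)-\psi(\tfrac12-x)$. Applying the reflection formula $\psi(1-z)-\psi(z)=\pi\cot\pi z$ at $z=\tfrac12-x$ isolates the oscillating piece $\tfrac{\pi}{2}\tan(\pi x)$, up to sign and normalization to be checked. The remaining piece, $\psi(\tfrac12+x)$, is smooth and equal to $\ln x+O(1/x)$.

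\textbf{Step 2 (remainder part).} Treat $\phi(k)=\sum_m F((2m+1)k)/(2m+1)$ as a Riemann sum with mesh $2k$. It should equal $\tfrac12\int_0^\infty F(s)\,\tfrac{ds}{s}$-type logarithmic behavior, $-\tfrac12 F(0)\ln k$ plus a constant, up to an error $O(k)=O(1/n)$. Here the smoothness of $F$ on $[0,2]$ (Lemma~\ref{lem:phiprime}'s Euler--Maclaurin bounds) and its exponential decay for $s>2$ control the Euler--Maclaurin error for the sum.

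Combining the two steps, I aim for
\[
h(k)=c\,\tan(\pi x)+A\ln x+B+O(1/n)\qquad(k\in I_n),
\]
with $c\neq0$ an explicit constant.

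\textbf{Main obstacle.} The key claim, and the one I expect to be hardest, is that the non-oscillating parts cancel: $A=0$ and $B=0$. Equivalently, $\ln 2$, $\gamma$ and $\ln\pi$ must cancel among the digamma constants, $F(0)=\zeta(0)-1+1=-\tfrac12+\ldots$, and $\int F(s)\,ds/s$. This is the same kind of cancellation seen in Theorem~\ref{thm:laurent}. I would first check it numerically against $z_n$ for moderate $n$. I would then try to prove it by identifying the constant via the known value $\zeta'(0)=-\tfrac12\ln 2\pi$, using the Mellin/Riemann-sum identity for $\sum_m F((2m+1)k)$.

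\textbf{Step 3 (locate the zero).} Granting $A=B=0$, the zero satisfies $\tan(\pi x)=O(1/n)$, so $x=n+1+O(1/n)$. Since $h$ is strictly decreasing by Theorem~\ref{thm:unique-zeros}, this root is the unique zero $z_n$. Converting back with $k=1/(2x)$, whose derivative $dk/dx=-1/(2x^2)$ is $O(n^{-2})$, gives $z_n=\tfrac{1}{2n+2}+O(n^{-3})$.

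\textbf{Step 4 (value of $\zeta(z_n)$).} Taylor-expand at $s=0$:
\[
\zeta(s)=-\tfrac12-\tfrac{\ln(2\pi)}{2}\,s+O(s^2).
\]
Substituting $s=z_n=\tfrac{1}{2n+2}+O(n^{-3})$ gives
\[
\zeta(z_n)=-\tfrac12-\tfrac{\ln(2\pi)}{4(n+1)}+O(n^{-2})=-\tfrac12-\tfrac{\ln(2\pi)}{4n}+O(n^{-2}),
\]
since $\tfrac1{n+1}-\tfrac1n=O(n^{-2})$. This step is routine once the first asymptotic is established.
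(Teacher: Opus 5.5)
\textbf{The step that fails is the one you flagged as the main obstacle, and it cannot be repaired: $A\neq 0$.} No cancellation among constants such as $\ln 2$, $\gamma$, $\ln\pi$ can help. $A$ is the coefficient of the unbounded term $\ln x$, and it receives two explicit contributions of the same sign.

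Carrying out your Step~1 gives
\[
h_{\mathrm{polar}}(k)=\tfrac12\bigl[\psi(\tfrac12)-\psi(\tfrac12-x)\bigr]
=\tfrac{\pi}{2}\tan(\pi x)-\tfrac12\psi(\tfrac12+x)-\tfrac{\gamma}{2}-\ln 2,
\]
whose non-oscillating part is $-\tfrac12\ln x+O(1)$. Your Step~2 gives $\phi(k)=\tfrac12F(0)\ln(1/k)+O(1)=-\tfrac14\ln x+O(1)$, because $F(0)=\zeta(0)=-\tfrac12$. As a cross-check, every term of the zeta-series with $(2m+1)k\ll 1$ is about $(\zeta(0)-1)/(2m+1)=-\tfrac32/(2m+1)$, and these sum to about $-\tfrac34\ln(1/k)$. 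Hence on $I_n$
\[
h(k)=\tfrac{\pi}{2}\tan(\pi x)-\tfrac34\ln x+B+O(1/n),\qquad B\approx-1.81,
\]
so $A=-\tfrac34$ and $B\neq 0$. This value of $B$ matches $h(1/2)\approx-1.8265$ from Example~\ref{ex:half}, where $x=1$.

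The zero therefore satisfies $\tan(\pi x_n)\approx\frac{2}{\pi}\bigl(\tfrac34\ln x_n+1.81\bigr)\to\infty$. So $x_n-(n+1)$ increases, logarithmically slowly, toward $\tfrac12$ rather than being $O(1/n)$, and $z_n-\frac{1}{2n+2}=-\frac{x_n-(n+1)}{2(n+1)x_n}\asymp -n^{-2}$. The paper's own table confirms this: $1/(2z_n)-(n+1)\approx 0.289,\,0.318,\,0.333,\,0.342,\,0.349,\,0.355$ for $n=0,\dots,5$. The first assertion of the corollary is therefore false as stated, and no route can prove it.

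The paper's argument does not establish it either:
\begin{itemize}
\item it keeps only the two adjacent poles, although the remaining polar terms contribute about $-\tfrac12\ln n$ at $k_n^*$;
\item it treats $\phi(k_n^*)$ as $O(1)$, although $|\phi(k)|\sim\tfrac14\ln(1/k)$;
\item even on its own terms it ends with a correction of size $O(n^{-2})$, which holds trivially for every point of $I_n$, since $|I_n|=O(n^{-2})$.
\end{itemize}

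Your Step~4 survives independently of all this. Membership $z_n\in I_n$ alone gives $z_n=\frac{1}{2n}+O(n^{-2})$, and the expansion $\zeta(s)=-\tfrac12-\tfrac12\ln(2\pi)\,s+O(s^2)$ then yields $\zeta(z_n)=-\tfrac12-\frac{\ln(2\pi)}{4n}+O(n^{-2})$. This is also how the paper obtains it.

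Your digamma/Riemann-sum framework is in fact the right tool for locating $z_n$, and far more accurate than the paper's two-pole heuristic. Carried out correctly, though, it shows that $z_n$ drifts toward the left endpoint $1/(2n+3)$, so it refutes the $O(n^{-3})$ claim rather than proving it.

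A minor correction: $F(s)=\zeta(s)-1-1/(s-1)\sim-1/s$ as $s\to\infty$, so $F$ does not decay exponentially for $s>2$. The paper makes the same slip in Proposition~\ref{prop:phi}. Since $F(s)/s$ is still integrable at infinity, your Step~2 is unaffected.
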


\begin{proof}
The zero of the two dominant polar terms
$r_n/(k - k_n) + r_{n+1}/(k - k_{n+1}) = 0$ is the weighted mean
$k_n^* = (r_n k_{n+1} + r_{n+1} k_n)/(r_n + r_{n+1})
= 1/(2n+2) + O(n^{-3})$.
The correction from $\phi$ is $O(n^{-2})$ by the implicit function
theorem, since $|h'(k_n^*)| \geq cn^2$ while
$|\phi(k_n^*)| = O(1)$.
The zeta value follows from
$\zeta(s) = -\frac{1}{2} - \frac{1}{2}\ln(2\pi)\,s + O(s^2)$ near $s = 0$.
\end{proof}

\begin{remark}\label{rem:contrast}
For $k > 1$, $h(k)$ is positive, strictly decreasing, and strictly
convex (as shown in Part~I), taking each positive value
exactly once.  For $0 < k < 1$, $h$ decreases monotonically from
$+\infty$ to $-\infty$ on each inter-polar interval, crossing zero
exactly once---a complete description of the zero structure.
\end{remark}

\section{The finite part at general poles}\label{sec:finitepartgeneral}

\begin{proposition}\label{prop:finitepartgeneral}
The Laurent expansion of $h(k)$ at $k = 1/(2m+1)$ is
\[
h(k) = \frac{1/(2m+1)^2}{k - 1/(2m+1)} + A_m
  + O(k - \tfrac{1}{2m+1}),
\]
where
\begin{equation}\label{eq:Am}
A_m = \frac{\gamma - 1}{2m+1}
  + \sum_{\substack{j \geq 0 \\ j \neq m}}
    \frac{\zeta\!\left(\frac{2j+1}{2m+1}\right) - 1}{2j+1}.
\end{equation}
\end{proposition}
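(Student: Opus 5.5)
We work directly from the zeta-series representation used to construct the continuation in Theorem~\ref{thm:merocont}, namely
\[
h(k)=\sum_{j=0}^{\infty}\frac{\zeta((2j+1)k)-1}{2j+1},
\]
valid as a meromorphic identity on $\operatorname{Re}(k)>0$. Fix $m\ge 0$ and set $k_m=1/(2m+1)$. We split the sum into the single singular term $j=m$ and the remainder $S_m(k):=\sum_{j\ne m}[\zeta((2j+1)k)-1]/(2j+1)$.

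For the singular term, put $s=(2m+1)k$, so that $s-1=(2m+1)(k-k_m)$. The expansion $\zeta(s)=(s-1)^{-1}+\gamma+O(s-1)$ then gives
\[
\frac{\zeta((2m+1)k)-1}{2m+1}=\frac{1}{(2m+1)^2(k-k_m)}+\frac{\gamma-1}{2m+1}+O(k-k_m).
\]
This is the same computation as in the proof of Theorem~\ref{thm:merocont}(b), carried one order further, and it produces both the principal part and the first summand of $A_m$.

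For the remainder, we show that $S_m$ is holomorphic in a neighbourhood of $k_m$. Its value there, $S_m(k_m)=\sum_{j\ne m}[\zeta(\tfrac{2j+1}{2m+1})-1]/(2j+1)$, is then the second summand of $A_m$, and Taylor's theorem supplies the $O(k-k_m)$ error. Choose a closed disc $D$ around $k_m$ with radius smaller than half the distance from $k_m$ to the neighbouring poles $1/(2m-1)$ (when $m\ge1$) and $1/(2m+3)$. On $D$, no argument $(2j+1)k$ with $j\ne m$ equals $1$. Hence each term of $S_m$ is holomorphic on $D$, and the finitely many terms with $(2j+1)\operatorname{Re}(k)\le 2$ cause no difficulty.

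The main obstacle is uniform convergence of the infinite tail on $D$. For $j$ large enough that $(2j+1)\operatorname{Re}k\ge 2$ on $D$, we use $|\zeta(s)-1|\le\sum_{n\ge2}n^{-\operatorname{Re}s}\le C\,2^{-\operatorname{Re}s}$ for $\operatorname{Re}s\ge2$; this is the same integral-comparison estimate used in Lemma~\ref{lem:phiprime}. The tail is then dominated by a geometric series in $2^{-(2j+1)\operatorname{Re}k}$, where $\operatorname{Re}k$ is bounded below on $D$. By the Weierstrass M-test and Weierstrass's theorem on uniform limits of holomorphic functions, $S_m$ is holomorphic on the interior of $D$.

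Adding the two expansions yields the stated Laurent expansion with $A_m$ as in \eqref{eq:Am}. As a consistency check, for $m=0$ this should reproduce $A_0=-\tfrac12\ln2$ from Theorem~\ref{thm:laurent}, which amounts to $C(1)=\sum_{j\ge1}[\zeta(2j+1)-1]/(2j+1)$.
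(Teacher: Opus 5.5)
Your proposal is correct and follows the same route as the paper: you split off the $j=m$ term of the zeta-series, expand it via $\zeta(s)=(s-1)^{-1}+\gamma+O(s-1)$ to get the principal part and $(\gamma-1)/(2m+1)$, and evaluate the remaining terms, which are holomorphic at $k=1/(2m+1)$, at the pole. The paper's proof is a one-line version of this; your Weierstrass M-test argument on a small disc avoiding the neighbouring poles supplies the uniform-convergence justification that the paper leaves implicit.
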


\begin{proof}
The $j = m$ term contributes the pole; all other terms are holomorphic
at $k = 1/(2m+1)$.
\end{proof}

\begin{example}\label{ex:finiteparts}
$A_0 = -\frac{1}{2}\ln 2$ recovers Theorem~\ref{thm:laurent}.  For
$m = 1$ (pole at $k = 1/3$), the series involves zeta at rational
arguments: $\zeta(1/3) \approx -0.9734$, $\zeta(5/3) \approx 2.1235$,
giving $A_1 \approx -1.788$.
\end{example}

\section{Mittag-Leffler decomposition and the Dirichlet lambda
function}\label{sec:mittagleffler}

\begin{theorem}[Mittag-Leffler
decomposition]\label{thm:mittagleffler}
For $\operatorname{Re}(k) > 0$,
\begin{equation}\label{eq:ml}
h(k) = \underbrace{\sum_{m=0}^{\infty}
  \frac{1/(2m+1)^2}{k - 1/(2m+1)}}_{h_{\mathrm{polar}}(k)}
  + \;\phi(k),
\end{equation}
where $\phi(k)$ is holomorphic on $\operatorname{Re}(k) > 0$.
\end{theorem}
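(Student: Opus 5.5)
Starting from the zeta-series continuation in Theorem~\ref{thm:merocont}, I would write each summand as
\[
\frac{\zeta((2m+1)k)-1}{2m+1}
= \frac{1}{(2m+1)\bigl((2m+1)k-1\bigr)} + \frac{F((2m+1)k)}{2m+1},
\qquad F(s):=\zeta(s)-1-\frac{1}{s-1}.
\]
The function $F$ is entire, since $\zeta$ has a single simple pole at $s=1$ with residue $1$. The first piece is exactly the polar term
\[
\frac{(2m+1)^{-2}}{k-1/(2m+1)}.
\]
This suggests defining
\[
\phi(k):=\sum_{m\ge0}\frac{F((2m+1)k)}{2m+1}.
\]
The task is then to show that both $h_{\mathrm{polar}}$ and $\phi$ converge locally uniformly on $\Re(k)>0$, away from the poles in the first case. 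Then the identity \eqref{eq:ml} holds first for $\Re(k)>1$, where everything converges absolutely and may be rearranged. It extends to $\Re(k)>0$ by uniqueness of meromorphic continuation, because the right side is meromorphic there.

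\textbf{Step 1: the polar sum.} On a compact set $K\subset\{\Re k>0\}$ avoiding the poles, $|k-1/(2m+1)|\ge \tfrac12\Re k\ge c>0$ for all large $m$. So the $m$-th term is $O((2m+1)^{-2})$ uniformly on $K$, and the sum converges by the Weierstrass M-test. Hence $h_{\mathrm{polar}}$ is meromorphic with exactly the stated poles.

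\textbf{Step 2: convergence of $\phi$, the main obstacle.} The difficulty is that $F(s)$ does not decay as $s\to\infty$. We have $F(s)= \zeta(s)-1-\frac1{s-1}$, and for $\Re s\ge 2$ this gives $|F(s)|\le |\zeta(s)-1|+\frac{1}{|s-1|}=O(2^{-\Re s})+\frac{1}{|s-1|}$. The term $\frac{1}{(2m+1)|(2m+1)k-1|}$ is $O(m^{-2})$ uniformly on compacts of $\Re k>0$, since $|(2m+1)k-1|\ge (2m+1)\Re k-1$. So for $m$ large enough that $(2m+1)\Re k\ge 2$ on $K$, the $m$-th term of $\phi$ is bounded by
\[
\frac{C\,2^{-(2m+1)\Re k}}{2m+1}+\frac{C'}{m^2},
\]
which is summable. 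Only finitely many $m$ remain, and each of those terms is entire in $k$. Thus $\phi$ is a locally uniform limit of holomorphic functions on $\Re k>0$, hence holomorphic. Note that the $-1$ and the polar part cannot be separated termwise: $\sum 1/(2m+1)$ diverges, so the $-1/(2m+1)$ must stay paired with the decaying $\zeta$ tail. This is exactly why $F$ includes the $-1$.

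\textbf{Step 3: assembly.} For $\Re k>1$, Proposition~\ref{prop:zeta-series} (extended from integers to complex $k$ with $\Re k>1$ by absolute convergence) gives $h=\sum_m[\zeta((2m+1)k)-1]/(2m+1)$. Splitting each term as above and using absolute convergence of both resulting series gives $h=h_{\mathrm{polar}}+\phi$ there. The identity theorem then propagates it to all of $\Re k>0$, matching the continuation in Theorem~\ref{thm:merocont}. As a consistency check, the residues $(2m+1)^{-2}$ agree with part (b) of that theorem.
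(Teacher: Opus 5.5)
Your proof is correct and uses the same decomposition as the paper (Proposition~\ref{prop:phi}). You split each zeta-series term into the polar term $\frac{(2m+1)^{-2}}{k-1/(2m+1)}$ plus $F((2m+1)k)/(2m+1)$, and set $\phi=\sum_m F((2m+1)k)/(2m+1)$.

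The one real difference is the convergence estimate for $\phi$, and there your argument is the sound one. The paper justifies convergence by asserting $F(s)=O(2^{-\operatorname{Re}(s)})$ for large $\operatorname{Re}(s)$. That is false: $F(s)=-\frac{1}{s-1}+O(2^{-\operatorname{Re}(s)})$ is dominated by the polynomially decaying $-1/(s-1)$. This non-exponential piece is exactly what produces $\phi(k)\sim-\pi^2/(8k)$ in Theorem~\ref{thm:asymp-cancel}.

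Your Step~2 keeps the exponentially small piece $\zeta(s)-1$ separate. It bounds the rest by $\frac{1}{(2m+1)\,|(2m+1)k-1|}=O(m^{-2})$ uniformly on compacta, and this is what actually makes $\phi$ a locally uniform limit of entire functions. Your remark on why the $-1$ must stay inside $F$ is also right. You additionally supply the M-test for $h_{\mathrm{polar}}$, which the paper leaves implicit.
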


\begin{corollary}[Polar data encode
$\lambda$]\label{cor:lambda}
The residues $r_m = 1/(2m+1)^2$ and pole locations
$k_m = 1/(2m+1)$ satisfy
\[
\sum_{m=0}^{\infty} r_m\, k_m^{s-1}
= \sum_{m=0}^{\infty} \frac{1}{(2m+1)^{s+1}}
= \lambda(s+1) = (1 - 2^{-(s+1)})\,\zeta(s+1).
\]
\end{corollary}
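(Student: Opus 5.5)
The identity is a direct computation: substitute the explicit residues and pole locations from Theorem~\ref{thm:merocont} and sum a Dirichlet series. First fix the range of $s$. Since $r_m k_m^{s-1} = (2m+1)^{-2}(2m+1)^{-(s-1)} = (2m+1)^{-(s+1)}$, the series converges absolutely exactly when $\operatorname{Re}(s+1) > 1$, that is, for $\operatorname{Re}(s) > 0$. I would state the corollary for $\operatorname{Re}(s)>0$ and extend the final equality to all $s \neq 0$ by analytic continuation of the right-hand side. The $\lambda(s+1)$ continuation has a removable singularity at $s=-1$ because the factor $1-2^{-(s+1)}$ vanishes there, so the only genuine singularity is at $s=0$.

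Next, the middle equality is immediate from the exponent computation above: $k_m^{s-1}=(2m+1)^{1-s}$ multiplied by $r_m=(2m+1)^{-2}$ gives $(2m+1)^{-(s+1)}$.

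For the last equality, write $w=s+1$ with $\operatorname{Re}(w)>1$ and split $\zeta(w)=\sum_{n\ge1} n^{-w}$ into odd and even $n$. The even part is $\sum_{j\ge1}(2j)^{-w} = 2^{-w}\zeta(w)$. The odd part is therefore $\zeta(w)-2^{-w}\zeta(w) = (1-2^{-w})\zeta(w)$, which by definition is $\lambda(w)$. Absolute convergence justifies the rearrangement.

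There is no real obstacle here. The only point needing care is stating the domain of validity of the series. As a consistency check, setting $s=1$ gives $\sum_m r_m = \lambda(2) = \tfrac34\zeta(2)=\pi^2/8$, which matches part (c) of Theorem~\ref{thm:merocont}.
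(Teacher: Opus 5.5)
Your proposal is correct and is exactly the direct computation the paper leaves implicit (it states the corollary without a separate proof): substitute $r_m k_m^{s-1}=(2m+1)^{-(s+1)}$ and split $\zeta(s+1)$ into odd and even terms; your explicit restriction to $\operatorname{Re}(s)>0$ is a worthwhile addition the paper omits. One aside is slightly off, though it does not affect the argument: $\lambda(s+1)$ has no singularity at $s=-1$ to remove, since $\zeta$ is regular at $0$ and the vanishing factor $1-2^{-(s+1)}$ simply gives $\lambda(0)=0$ (you may be thinking of the factor $1-2^{1-w}$ in the eta function, which does cancel the pole at $w=1$).
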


\begin{remark}[Dual encoding of $\zeta$]\label{rem:dual}
The function $h$ encodes $\zeta$ in two ways: \emph{(i)}~through its
values, via $\zeta(k) = 1 + h(k) - C(k)$; \emph{(ii)}~through its
singularities, via Corollary~\ref{cor:lambda}.
\end{remark}

\begin{remark}[Even/odd dichotomy]\label{rem:evenodd-polar}
The even power sums $\sum r_m k_m^{2j-2} = \lambda(2j) \in
\mathbb{Q}\pi^{2j}$ are completely known; the odd power sums
$\sum r_m k_m^{2j-1} = \lambda(2j+1) = (1-2^{-(2j+1)})\zeta(2j+1)$
involve the arithmetically mysterious odd zeta values $\zeta(3),
\zeta(5), \ldots$.  The polar data of $h$ thus reflect the
even/odd dichotomy of zeta values.
\end{remark}

\section{The holomorphic remainder and asymptotic
cancellation}\label{sec:cancellation}

\begin{proposition}\label{prop:phi}
Define the entire function
$F(s) := \zeta(s) - 1 - 1/(s-1)$, so that $F(1) = \gamma - 1$.
The holomorphic remainder is
\begin{equation}\label{eq:phiexplicit}
\phi(k) = \sum_{m=0}^{\infty}
  \frac{F\bigl((2m+1)k\bigr)}{2m+1},
\end{equation}
converging absolutely and uniformly on compact subsets of
$\operatorname{Re}(k) > 0$.
\end{proposition}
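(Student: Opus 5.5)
The plan has three parts: show that each term of the series is entire in $k$, prove locally uniform absolute convergence of the series, and then match it with $h - h_{\mathrm{polar}}$.

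\emph{Entirety of the terms.} Since $\zeta(s) - 1/(s-1)$ is entire, $F$ is entire. Hence every term $F((2m+1)k)/(2m+1)$ is entire in $k$.

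\emph{Identification with the remainder.} Start from the zeta-series representation of Proposition~\ref{prop:zeta-series}, continued to $\operatorname{Re}(k)>0$ as in the proof of Theorem~\ref{thm:merocont}. In each summand, write
\[
\frac{\zeta((2m+1)k)-1}{2m+1}=\frac{F((2m+1)k)}{2m+1}+\frac{1}{(2m+1)((2m+1)k-1)}.
\]
The last term equals $\frac{(2m+1)^{-2}}{k-1/(2m+1)}$, which is exactly the $m$-th term of $h_{\mathrm{polar}}$ in Theorem~\ref{thm:mittagleffler}. Both $\sum_m F((2m+1)k)/(2m+1)$ (by the convergence step below) and $h_{\mathrm{polar}}$ converge separately, the latter because its terms are $O(m^{-1}\cdot m^{-1})$ locally uniformly away from the poles. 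So the splitting is legitimate, and $\phi = h - h_{\mathrm{polar}}$ equals the claimed series. Each term is entire, so by Weierstrass the locally uniform limit is holomorphic on $\operatorname{Re}(k)>0$.

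\emph{Locally uniform convergence: main obstacle.} Fix a compact set $K\subset\{\operatorname{Re}(k)>0\}$ and let $\sigma_0=\min_K \operatorname{Re}k>0$ and $T=\max_K|\operatorname{Im}k|$. Split the sum at $(2m+1)\sigma_0\ge 2$; only finitely many $m$ lie below this threshold, so those terms are harmless. For the remaining $m$, write $s=(2m+1)k$, so that $\operatorname{Re}s\ge 2$. Then
\[
|\zeta(s)-1|\le \zeta(\operatorname{Re}s)-1\le C\,2^{-\operatorname{Re}s},
\]
and
\[
\frac{1}{|s-1|}\le \frac{1}{\operatorname{Re}s-1}\le \frac{1}{(2m+1)\sigma_0-1}.
\]
This gives
\[
\frac{|F(s)|}{2m+1}\le \frac{C\,2^{-(2m+1)\sigma_0}}{2m+1}+\frac{1}{(2m+1)\bigl((2m+1)\sigma_0-1\bigr)},
\]
which is $O(m^{-2})$ uniformly on $K$. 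That proves absolute and uniform convergence on $K$.

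The delicate point is that $F(s)$ alone does not decay: $F(s)\to -1$ in the imaginary direction only via the $-1/(s-1)$ piece. What saves convergence is the extra factor $1/(2m+1)$, and this is why the bound above uses $1/((2m+1)((2m+1)\sigma_0-1))$. I would verify carefully that the constant $C$ is uniform, which holds because $\zeta(\sigma)-1\le 2^{-\sigma}(1+2/(\sigma-1))$ for $\sigma\ge2$. Finally, as a consistency check, $F(1)=\gamma-1$ follows from the Laurent expansion $\zeta(s)=1/(s-1)+\gamma+O(s-1)$.
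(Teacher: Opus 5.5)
Your proof is correct, and it uses the same termwise splitting as the paper,
$\frac{\zeta((2m+1)k)-1}{2m+1}=\frac{(2m+1)^{-2}}{k-1/(2m+1)}+\frac{F((2m+1)k)}{2m+1}$.
The real difference is in the convergence estimate, and there your version is the sound one.

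The paper justifies convergence by asserting $F(s)=O(2^{-\operatorname{Re}(s)})$ for large $\operatorname{Re}(s)$, and this is false. In fact $F(s)=-\frac{1}{s-1}+O(2^{-\operatorname{Re}(s)})$, so $F$ decays only like $1/|s|$. For example, $F(10)\approx -0.110$ while $2^{-10}\approx 0.001$. The paper's own Theorem~\ref{thm:asymp-cancel}, which gives $\phi(k)\sim -\pi^2/(8k)$, is also incompatible with exponential decay of $F$.

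Your argument handles this correctly:
\begin{itemize}
\item You split $F$ into $\zeta(s)-1$ and $-1/(s-1)$.
\item The weight $1/(2m+1)$ turns the second piece into an $O(m^{-2})$ majorant that is uniform on $K$.
\item This is exactly what the Weierstrass $M$-test needs.
\item You also check that $h_{\mathrm{polar}}$ converges separately, which the paper leaves implicit.
\end{itemize}

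One side remark of yours is inaccurate. $F(s)$ does not tend to $-1$ in the imaginary direction. What is true is that $F(s)\to 0$ as $\operatorname{Re}(s)\to\infty$, but only at rate $1/|s|$. So without the weight $1/(2m+1)$, the series $\sum_m F((2m+1)k)$ would diverge like a harmonic series. That remark plays no role in your argument.
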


\begin{proof}
Decompose each term:
$[\zeta((2m+1)k) - 1]/(2m+1) = \frac{1/(2m+1)^2}{k-1/(2m+1)}
+ F((2m+1)k)/(2m+1)$.
The first summand is the $m$-th polar term; summing the second gives
$\phi$.  Convergence follows from $F(s) = O(2^{-\operatorname{Re}(s)})$
for large $\operatorname{Re}(s)$.
\end{proof}

\begin{remark}
Whether $\phi$ extends holomorphically past $\operatorname{Re}(k) = 0$
is open.  Each term $F((2m+1)k)/(2m+1)$ is entire in~$k$; the
obstruction, if any, comes from the convergence of the sum.
\end{remark}

\begin{proposition}\label{prop:hpolar_expansion}
For real $k > 1$,
\begin{equation}\label{eq:hpolar_expansion}
h_{\mathrm{polar}}(k)
= \sum_{n=0}^{\infty} \frac{\lambda(n+2)}{k^{n+1}}
= \frac{\pi^2/8}{k} + \frac{7\zeta(3)/8}{k^2}
  + \frac{\pi^4/96}{k^3} + \frac{31\zeta(5)/32}{k^4} + \cdots.
\end{equation}
\end{proposition}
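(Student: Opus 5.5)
The plan is to expand each polar term of $h_{\mathrm{polar}}$ from Theorem~\ref{thm:mittagleffler} as a geometric series in $1/k$, then interchange the two summations. For real $k>1$ and every $m\ge0$ we have $k_m = 1/(2m+1)\le 1 < k$, so
\[
\frac{1/(2m+1)^2}{k-1/(2m+1)} = \frac{1}{(2m+1)^2 k}\cdot\frac{1}{1-\frac{1}{(2m+1)k}}
= \sum_{n=0}^{\infty}\frac{1}{(2m+1)^{n+2}\,k^{n+1}},
\]
and this series converges since $1/((2m+1)k)\le 1/k<1$.

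Summing over $m$ and formally swapping the order of summation, the coefficient of $k^{-(n+1)}$ becomes
\[
\sum_{m\ge0}(2m+1)^{-(n+2)}=\lambda(n+2),
\]
which is exactly the Dirichlet lambda identity recorded in Corollary~\ref{cor:lambda}, with $s=n+1$.

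To justify the interchange I will apply Tonelli's theorem, since all terms are nonnegative for real $k>1$. It therefore suffices that
\[
\sum_{n\ge0}\lambda(n+2)\,k^{-(n+1)}<\infty .
\]
This holds because $\lambda(n+2)\le\lambda(2)=\pi^2/8$ for all $n$, so the sum is dominated by a geometric series with ratio $1/k<1$. As a cross-check, the double series also equals $\sum_m r_m/(k-k_m)$, and this converges absolutely because $r_m/(k-k_m)\le r_m/(k-1)$. This step is the only point needing care, but it is routine here; the hypothesis $k>1$ is what makes the geometric expansion valid uniformly in $m$, since the largest pole is at $k_0=1$.

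Finally, I will read off the first four coefficients from the known values:
\begin{itemize}
\item $\lambda(2)=\tfrac34\zeta(2)=\pi^2/8$;
\item $\lambda(3)=\tfrac78\zeta(3)$;
\item $\lambda(4)=\tfrac{15}{16}\cdot\pi^4/90=\pi^4/96$;
\item $\lambda(5)=\tfrac{31}{32}\zeta(5)$.
\end{itemize}
These use $\lambda(s)=(1-2^{-s})\zeta(s)$, which gives the displayed expansion~\eqref{eq:hpolar_expansion}.
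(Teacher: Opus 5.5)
Your proof is correct and is essentially the paper's own argument: expand each polar term $r_m/(k-k_m)$ as a geometric series in $k_m/k$, interchange the two sums, and identify the coefficient of $k^{-(n+1)}$ as $\lambda(n+2)$. Your Tonelli justification (all terms are nonnegative for real $k>1$) and your explicit evaluation of $\lambda(2),\dots,\lambda(5)$ fill in details that the paper leaves implicit.
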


\begin{proof}
Expand each polar term as a geometric series in $k_m/k$ for $k > 1$ and
interchange the order of summation.
\end{proof}

\begin{theorem}[Asymptotic
cancellation]\label{thm:asymp-cancel}
The polar part $h_{\mathrm{polar}}(k) \sim \pi^2/(8k)$ decays
polynomially, while $h(k) = \zeta(k) - 1 + O(2^{-3k})$ decays
exponentially.  Hence
\[
\phi(k) = -\frac{\pi^2/8}{k} - \frac{7\zeta(3)/8}{k^2}
  - \frac{\pi^4/96}{k^3} - \cdots + 2^{-k} + 3^{-k} + \cdots,
\]
and $\phi(k) \sim -\pi^2/(8k)$ as $k \to \infty$.  For example, at
$k = 5$: $h_{\mathrm{polar}}(5) \approx 0.2989$,
$\phi(5) \approx -0.2619$, while
$h(5) = h_{\mathrm{polar}} + \phi \approx 0.0369$---an order of
magnitude smaller than either component.
\end{theorem}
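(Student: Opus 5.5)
We prove the three assertions of Theorem~\ref{thm:asymp-cancel} in turn: the decay of $h$, the expansion of $h_{\mathrm{polar}}$, and then the behaviour of $\phi$, which follows by subtraction.

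\emph{Step 1: exponential decay of $h$.} For real $k>1$, Proposition~\ref{prop:zeta-series} splits off the $m=0$ term:
\[
h(k)=\zeta(k)-1+\sum_{m\ge1}\frac{\zeta((2m+1)k)-1}{2m+1}.
\]
For real $s\ge 2$ we have $\zeta(s)-1\le 2^{-s}+\int_2^\infty x^{-s}\,dx\le 3\cdot 2^{-s}$. Hence the tail is bounded by $3\sum_{m\ge1}2^{-(2m+1)k}/(2m+1)=O(2^{-3k})$. This gives $h(k)=\zeta(k)-1+O(2^{-3k})=2^{-k}+3^{-k}+\cdots$, which is $O(2^{-k})$ and therefore exponentially small.

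\emph{Step 2: expansion of $h_{\mathrm{polar}}$.} By Proposition~\ref{prop:hpolar_expansion}, for $k>1$,
\[
h_{\mathrm{polar}}(k)=\sum_{n\ge0}\frac{\lambda(n+2)}{k^{n+1}}.
\]
This series converges because $1\le\lambda(n+2)\le\lambda(2)=\pi^2/8$. The coefficients are bounded, so the series is dominated by a geometric series in $1/k$. Consequently $h_{\mathrm{polar}}(k)=\frac{\pi^2}{8k}+O(k^{-2})$, and in particular $h_{\mathrm{polar}}(k)\sim\pi^2/(8k)$. For a completely elementary check, note that each polar term satisfies $r_m/(k-k_m)\ge r_m/k$, and $\sum_m r_m=\pi^2/8$ by Theorem~\ref{thm:merocont}(c).

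\emph{Step 3: the expansion of $\phi$.} Theorem~\ref{thm:mittagleffler} gives $\phi=h-h_{\mathrm{polar}}$ on $k>1$. Combining Steps~1 and~2 yields the displayed formula: the polar coefficients appear with negative signs, and the Dirichlet terms $2^{-k}+3^{-k}+\cdots$ appear from $\zeta(k)-1$, up to the $O(2^{-3k})$ error already absorbed in Step~1. Since $2^{-k}=o(1/k)$, it follows that $\phi(k)\sim-\pi^2/(8k)$.

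As an independent consistency check, one can use Proposition~\ref{prop:phi}. For large $s$ we have $F(s)=\zeta(s)-1-1/(s-1)\approx-1/(s-1)$, so
\[
\phi(k)\approx-\sum_m\frac{1}{(2m+1)((2m+1)k-1)},
\]
which reproduces the same leading term $-\pi^2/(8k)$.

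\emph{Numerical example.} The values at $k=5$ are obtained by truncating the series $\sum_n\lambda(n+2)5^{-n-1}$, whose terms decay geometrically with ratio $1/5$, and by evaluating $h(5)\approx\zeta(5)-1$ plus the $O(2^{-15})$ tail. Then $\phi(5)$ is computed as the difference $h(5)-h_{\mathrm{polar}}(5)$.

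\emph{Main obstacle.} The only delicate point is Step~2's interchange of summation in Proposition~\ref{prop:hpolar_expansion}, which requires $k>1=\max_m k_m$. Since we take $k\to\infty$, this condition holds and the interchange is justified by absolute convergence.
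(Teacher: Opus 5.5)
Your argument is correct and is essentially the paper's own justification. The paper obtains the result by exactly this subtraction: it takes the polynomially decaying expansion of $h_{\mathrm{polar}}$ (Proposition~\ref{prop:hpolar_expansion}) away from the exponentially small $h(k)=\zeta(k)-1+O(2^{-3k})$ coming from the zeta-series, and your explicit bounds ($\zeta(s)-1\le 3\cdot 2^{-s}$ for $s\ge 2$, $1\le\lambda(n+2)\le\pi^2/8$) supply the estimates the paper leaves implicit.

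If you actually carry out the $k=5$ evaluation you describe, you get $h_{\mathrm{polar}}(5)\approx 0.298937$ and $h(5)\approx 0.036938$. Hence $\phi(5)\approx -0.26200$, so the printed value $-0.2619$ is a truncation rather than a rounding.
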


\section{Real zeros of \texorpdfstring{$h$}{h} and
the pole-zero duality}\label{sec:zeros}

By Proposition~\ref{prop:oscillation}, $h$ has at least one zero
$k_n$ in each interval $(1/(2n+3),\, 1/(2n+1))$.  The first several,
computed by bisection on the zeta-series~\eqref{eq:hk-zeta-prop}:

\medskip
\begin{center}
\begin{tabular}{cccc}
\toprule
$n$ & $k_n$ & $\zeta(k_n)$ & Interval \\
\midrule
0 & 0.387942 & $-1.1029$ & $(1/3,\; 1)$ \\
1 & 0.215696 & $-0.7577$ & $(1/5,\; 1/3)$ \\
2 & 0.150018 & $-0.6644$ & $(1/7,\; 1/5)$ \\
3 & 0.115142 & $-0.6208$ & $(1/9,\; 1/7)$ \\
4 & 0.093470 & $-0.5956$ & $(1/11,\; 1/9)$ \\
5 & 0.078683 & $-0.5790$ & $(1/13,\; 1/11)$ \\
\bottomrule
\end{tabular}
\end{center}
\medskip

\begin{proposition}[Pole-zero duality]\label{prop:duality}
The identity $\zeta(k) = 1 + h(k) - C(k)$ yields the following duality:
\begin{align}
h(k_0) &= 0 &\iff\quad
  \zeta(k_0) &= 1 - C(k_0), \label{eq:hzero}\\
\zeta(\rho) &= 0 &\iff\quad
  h(\rho) &= C(\rho) - 1. \label{eq:zetazero}
\end{align}
Thus the zeros of $h$ are the points where $\zeta = 1 - C$, and the
zeros of $\zeta$ are the points where $h = C - 1$.
\end{proposition}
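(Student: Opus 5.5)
The duality is a pure rearrangement of one identity, so the plan is first to pin down that identity and its domain, and then read off both equivalences. Here $C(k)$ is the function implicitly defined in the earlier sections through $h(k) = (\zeta(k)-1) + C(k)$, as used in the proof of Theorem~\ref{thm:laurent}. By Proposition~\ref{prop:zeta-series} this means
\[
C(k) = \sum_{m=1}^{\infty}\frac{\zeta\bigl((2m+1)k\bigr)-1}{2m+1}.
\]
On $\operatorname{Re}(k)>1$ this holds termwise. Theorem~\ref{thm:merocont} then extends both $h$ and $C$ meromorphically to $\operatorname{Re}(k)>0$: $C$ has simple poles at $k=1/(2m+1)$ for $m\ge1$ and is holomorphic near $k=1$. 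The identity $\zeta(k) = 1 + h(k) - C(k)$ therefore holds as an identity of meromorphic functions on $\operatorname{Re}(k)>0$ by the identity theorem.

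The first step is to state precisely where the equivalences are claimed. I will restrict to points $k_0$ in $\operatorname{Re}(k)>0$ that are not poles of $h$, $C$, or $\zeta$. These are exactly the points $k\notin\{1/(2m+1): m\ge0\}$. The pole of $\zeta$ at $1$ coincides with the pole of $h$ at $1$, and the remaining poles of $h$ coincide with those of $C$. At every other point, all three functions are finite complex numbers and the identity holds pointwise.

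Given this, \eqref{eq:hzero} is immediate: substitute $h(k_0)=0$ into $\zeta(k_0)=1+h(k_0)-C(k_0)$ to get $\zeta(k_0)=1-C(k_0)$, and the converse follows by subtracting. Similarly, \eqref{eq:zetazero} follows by setting $\zeta(\rho)=0$, which gives $h(\rho)=C(\rho)-1$, and conversely. Nontrivial zeros $\rho$ have $0<\operatorname{Re}\rho<1$ and are nonreal, so they are never poles, and the equivalence applies to all of them.

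The only real obstacle is bookkeeping rather than analysis. One must make sure $C$ is genuinely meromorphic on the whole half-plane, which follows from the same truncation argument as in Theorem~\ref{thm:merocont}, with the tail converging for $\operatorname{Re}(k)>1/(2M+3)$. One must also exclude the pole set explicitly, so that the equivalences are not asserted at points where both sides are infinite.
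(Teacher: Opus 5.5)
Your proposal is correct and takes the same approach as the paper. The paper gives no separate proof and treats the duality as a direct rearrangement of $\zeta(k)=1+h(k)-C(k)$; your identification $C(k)=\sum_{m\ge1}\bigl(\zeta((2m+1)k)-1\bigr)/(2m+1)$ and your exclusion of the pole set $\{1/(2m+1)\}$ are the bookkeeping the paper leaves implicit. (Proposition~\ref{prop:zeta-series} is stated only for integers $k\ge2$, so the identity on $\operatorname{Re}(k)>1$ rests on expanding $\arctanh(n^{-k})=\sum_{m\ge0}n^{-(2m+1)k}/(2m+1)$ and swapping the absolutely convergent double sum, which the paper also uses tacitly.)
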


\begin{remark}[Convergence to $\zeta(0)$]\label{rem:zetaatzeros}
Since $k_n \to 0$ and $\zeta$ is continuous at $0$, we have
$\zeta(k_n) \to \zeta(0) = -1/2$.  Numerically,
$(2n+3)(\zeta(k_n) + 1/2) \to -1$ slowly, consistent with
$\zeta(k_n) + 1/2 \sim -1/(2n+3)$.  The zeros of $h$ thus provide a
discrete sampling of $\zeta$ on $(0,1)$ at arithmetically determined
points converging to the origin.
\end{remark}

\section{Boundary of the continuation}\label{sec:boundary}

\begin{theorem}\label{thm:noleft}
The function $h$ admits no analytic continuation to any point $k_0$ with
$\operatorname{Re}(k_0) < 0$.
\end{theorem}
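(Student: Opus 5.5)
The plan is to show that the imaginary axis $\operatorname{Re}(k)=0$ is a natural boundary for $h$. Any continuation of $h$ to a point $k_0$ with $\operatorname{Re}(k_0)<0$ would have to run along a path from $\operatorname{Re}(k)>0$ that crosses the axis at some point $it_*$. It would therefore give a holomorphic extension of $h$ to a neighbourhood of $it_*$. (The continuation of Theorem~\ref{thm:merocont} has its poles on the real axis, accumulating only at $0$.) So it suffices to prove that the set $S$ of points of $i\mathbb{R}$ at which $h$ has no local holomorphic extension is all of $i\mathbb{R}$. The set of points where such an extension exists is open in $i\mathbb{R}$, so $S$ is closed. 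Hence it is enough to exhibit a dense subset of $i\mathbb{R}$ contained in $S$.

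\textbf{Candidate singular points.} Formally, $h(k)=\sum_{n\ge2}\arctanh(n^{-k})$. The $n$-th summand has logarithmic branch points where $n^{-k}=\pm1$, namely at $k=i\pi j/\ln n$ for $j\in\mathbb{Z}$. Take $N\ge2$ not a perfect power and $t_0=\pi j/\ln N$ with $j$ odd, so that $N^{-it_0}=-1$. Since $\{\pi j/\ln N\}$ is dense in $\mathbb{R}$ as $N$ and $j$ vary, these points $it_0$ form the dense set I will try to place in $S$. The integers $n\ge2$ whose terms are also singular at $it_0$ are exactly those with $\ln n/\ln N$ rational, which for $N$ not a perfect power means the powers $n=N^a$. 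Write
\[
h(k)=B_N(k)+R_N(k),\qquad B_N(k):=\sum_{a\ge1}\arctanh(N^{-ak}).
\]
The plan is to show that $R_N$ extends holomorphically to a neighbourhood of $it_0$, while $B_N$ does not.

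\textbf{Regularity of $R_N$.} The series defining $R_N$ diverges for $0<\operatorname{Re}(k)\le1$, so I will not work with it directly. Instead I would write it through the zeta-series of Proposition~\ref{prop:zeta-series}: the $m$-th term becomes $\bigl(\zeta((2m+1)k)-1-\sum_{a\ge1}N^{-a(2m+1)k}\bigr)/(2m+1)$. Then I would control this sum uniformly near $it_0$ using Euler--Maclaurin, in the spirit of the bounds in Lemma~\ref{lem:phiprime}. I expect this to be the \textbf{main obstacle}. Near the imaginary axis the arguments $(2m+1)k$ have real parts tending to $0$ and imaginary parts growing like $(2m+1)t_0$. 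The zeta values there are not small, and the off-resonant contributions from $n\neq N^a$ must cancel on average rather than termwise. What I would try first is a smoothed (Riesz-mean) version of the Dirichlet series $\sum_n\arctanh(n^{-k})$. I would then show that the non-resonant frequencies $t_0\ln n\not\equiv 0\pmod{\pi}$ contribute a function bounded near $it_0$ along the ray $k=\varepsilon+it_0$, $\varepsilon\downarrow0$. This weaker statement suffices, because unboundedness of $h$ along that ray already rules out a holomorphic extension at $it_0$.

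\textbf{Singularity of $B_N$.} At $k=\varepsilon+it_0$ we have $N^{-ak}=(-1)^aN^{-a\varepsilon}$. The odd-$a$ terms give $\arctanh(-N^{-a\varepsilon})$, which behaves like $-\tfrac12\log\frac{1}{a\varepsilon\ln N}$ for $a\varepsilon\ll1$. The even-$a$ terms are positive and of the same size. Pairing consecutive terms, the sum over $a\lesssim 1/\varepsilon$ leaves a net $-\tfrac12\log(1/\varepsilon)$ from the first term plus a convergent remainder. I would verify this by writing $B_N(\varepsilon+it_0)=\sum_a\arctanh\bigl((-x)^a\bigr)$ with $x=N^{-\varepsilon}\to1^-$ and expanding as $\sum_{m}\frac{1}{2m+1}\cdot\frac{-x^{2m+1}}{1+x^{2m+1}}$. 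This is Abel-summable, and its divergence as $x\to1^-$ can be read off explicitly; I expect it to be logarithmic. If instead the pairing cancels the divergence exactly, the fallback is to pick $j$ even ($N^{-it_0}=1$). There all terms $\arctanh(N^{-a\varepsilon})$ are positive, and $B_N\to+\infty$ like $\tfrac{\pi^2}{8\varepsilon\ln N}$, which is clearly unbounded. This gives unboundedness of $h$ along the ray, so $it_0\in S$, and density together with closedness yields $S=i\mathbb{R}$. That proves the theorem.
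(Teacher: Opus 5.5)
Your reduction is correct and is the right frame. Since the poles of the continuation accumulate only at $0$, continuing $h$ to some $k_0$ with $\operatorname{Re}(k_0)<0$ forces a holomorphic extension across a point of $i\mathbb{R}\setminus\{0\}$. The non-extendable boundary points form a closed set, so the theorem is equivalent to $i\mathbb{R}$ being a natural boundary. The paper instead infers non-continuability from the super-exponential growth of $\zeta((2m+1)k_0)$ at $k_0$. You rightly avoid that, since divergence of one representation at $k_0$ does not by itself obstruct continuation.

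The gap is the step you call the main obstacle, and it is not merely unproven. The split $h=B_N+R_N$ does not isolate the singular behaviour, and in your fallback case the claimed regularity of $R_N$ is false. Apply the convexity bound $\zeta(\sigma+i\tau)\ll_{t_0}|\tau|^{1/2}\log(2+|\tau|)$ (for $\sigma\ge0$, $|\tau|\ge|t_0|$) termwise to the zeta-series of Proposition~\ref{prop:zeta-series}; the terms with $(2m+1)\varepsilon>2$ contribute $O(1)$. This gives $h(\varepsilon+it_0)=O(\varepsilon^{-1/2}\log(1/\varepsilon))$ for every real $t_0\neq0$. When $N^{-it_0}=1$ you correctly have $B_N(\varepsilon+it_0)\sim\pi^2/(8\varepsilon\ln N)$, so $R_N(\varepsilon+it_0)\sim-\pi^2/(8\varepsilon\ln N)$. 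The off-resonant part cancels the resonant blow-up, and $h$ does not grow like $1/\varepsilon$ along the ray.

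This is visible termwise. The $m$-th term $\frac{1}{2m+1}\bigl(\zeta((2m+1)k)-1-\frac{1}{N^{(2m+1)k}-1}\bigr)$ of your series for $R_N$ has a simple pole at $k=it_0$. The same happens for $\zeta(k)-1-\frac{1}{N^k-1}$, although $\zeta$ is holomorphic there. The continued value is not a sum of individual $n$-contributions: the regularization, i.e.\ the subtracted $x^{1-s}/(1-s)$ in $\zeta(s)=\lim_{x\to\infty}(\sum_{n\le x}n^{-s}-x^{1-s}/(1-s))$, couples all $n$. So singularities of the summands $\arctanh(n^{-k})$ on $i\mathbb{R}$ need not survive in $h$.

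Your primary case $N^{-it_0}=-1$ is then unsupported. There $B_N(\varepsilon+it_0)=-\sum_{m\ge0}\frac{1}{(2m+1)(1+N^{(2m+1)\varepsilon})}=-\tfrac14\log(1/\varepsilon)+O(1)$. Your pairing heuristic is off: the pair $a=2b-1,2b$ contributes about $\tfrac12\log\frac{2b-1}{2b}$, which is not summable, so there is no convergent remainder. You would therefore need $R_N(\varepsilon+it_0)=o(\log(1/\varepsilon))$, while the a priori bound is only $O(\varepsilon^{-1/2}\log(1/\varepsilon))$. Getting below that bound requires genuine cancellation among the values $\zeta((2m+1)(\varepsilon+it_0))$. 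The even case shows that such cancellation can just as well annihilate the resonant divergence as leave it intact, and the Riesz-mean suggestion does not address this.

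So no point of $i\mathbb{R}\setminus\{0\}$ has actually been shown to be singular, and the density-plus-closedness step has nothing to act on. The missing ingredient is a mechanism forcing $h$ itself, not a sub-sum of its formal Dirichlet series, to be non-analytic at a dense set of points of $i\mathbb{R}$. By your own reduction this is the entire content of the theorem. It is also exactly the behaviour on $\operatorname{Re}(k)=0$ that Remark~\ref{rem:boundary-summary} lists as open.
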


\begin{proof}
If $h$ had an analytic continuation to $k_0$ with
$\operatorname{Re}(k_0) < 0$, choose $M$ large enough that
$\operatorname{Re}(k_0) > 1/(2M+3)$ fails (this holds for any $M$).
Then $R_M(k) = h(k) - \sum_{m=0}^{M}[\zeta((2m+1)k)-1]/(2m+1)$ would
continue analytically to $k_0$.  But for
$\operatorname{Re}(k) < 0$ the arguments $(2m+1)k$ have
$\operatorname{Re}((2m+1)k) \to -\infty$.  By the functional equation
for $\zeta$, the Gamma factor forces
$|\zeta((2m+1)k)| \sim |(2m+1)k|^{1/2-(2m+1)\operatorname{Re}(k)}$
(up to oscillation), which grows super-exponentially in~$m$.  Hence the
terms $\zeta((2m+1)k_0)/(2m+1)$ grow faster than any exponential,
precluding boundedness of any continuation near $k_0$.
\end{proof}

\begin{remark}\label{rem:boundary-summary}
The domain of continuation is:
\begin{itemize}
\item $\operatorname{Re}(k) > 0$: meromorphic continuation.
\item $k = 0$: non-isolated singularity (pole accumulation).
\item $\operatorname{Re}(k) < 0$: no analytic continuation exists
  (Theorem~\ref{thm:noleft}).
\item $\operatorname{Re}(k) = 0$, $k \neq 0$: open.
\end{itemize}
\end{remark}

%% ===================================================================
\setcounter{part}{2}
\part{Multiplicative theory}\label{part:multiplicative}
%% ===================================================================

\section{The prime restriction
\texorpdfstring{$h_p$}{hp}}\label{sec:hp}

The closed form $h(k) = \frac{1}{2}\ln(f(k)/(2g(k)))$ from
Theorem~\ref{thm:main} sums over all integers $n \geq 2$.
Restricting to prime indices yields a multiplicative counterpart
whose structure reflects the Euler product for~$\zeta$.

\begin{definition}\label{def:hp}
The \emph{prime arctanh function} is
\[
h_p(k) := \sum_{p\;\mathrm{prime}} \arctanh(p^{-k}),
\qquad \operatorname{Re}(k) > 1.
\]
\end{definition}

\begin{theorem}\label{thm:hp}
For $\operatorname{Re}(k) > 1$,
\begin{equation}\label{eq:hp}
h_p(k) = \ln\zeta(k) - \tfrac{1}{2}\ln\zeta(2k)
= \tfrac{1}{2}\ln\frac{\zeta(k)^2}{\zeta(2k)}.
\end{equation}
\end{theorem}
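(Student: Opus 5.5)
The plan is to work prime by prime, using $\arctanh(x)=\tfrac12\ln\frac{1+x}{1-x}$ together with the Euler product. For $\operatorname{Re}(k)>1$ and each prime $p$, we have $|p^{-k}|\le 2^{-\operatorname{Re}(k)}<1/\sqrt2$. On the unit disc the principal branch gives
\[
\arctanh(p^{-k})=\tfrac12\Log(1+p^{-k})-\tfrac12\Log(1-p^{-k}).
\]
Next we rewrite $1+p^{-k}=(1-p^{-2k})/(1-p^{-k})$. Since both $1-p^{-2k}$ and $1-p^{-k}$ lie in the disc $|z-1|<1$, principal logarithms add without a $2\pi i$ correction, so
\[
\Log(1+p^{-k})=\Log(1-p^{-2k})-\Log(1-p^{-k}).
\]
Substituting this yields the termwise identity
\[
\arctanh(p^{-k})=-\Log(1-p^{-k})+\tfrac12\Log(1-p^{-2k}).
\]
Alternatively, one can see this directly from power series. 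The series $\arctanh(x)=\sum_{j\ \mathrm{odd}} x^j/j$ and $-\Log(1-x)+\tfrac12\Log(1-x^2)=\sum_j x^j/j-\sum_j x^{2j}/(2j)$ agree coefficientwise.

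Then we sum over primes. Absolute convergence follows from $|\Log(1-z)|\le 2|z|$ for $|z|\le 1/2$ and $\sum_p p^{-\operatorname{Re}(k)}<\infty$. This lets us split the sum as
\[
h_p(k)=-\sum_p\Log(1-p^{-k})+\tfrac12\sum_p\Log(1-p^{-2k}).
\]
By the Euler product, $\ln\zeta(s):=-\sum_p\Log(1-p^{-s})$ for $\operatorname{Re}(s)>1$. This function is holomorphic and exponentiates to $\zeta(s)$, and it is the branch of the logarithm that is real on $(1,\infty)$. Since $\operatorname{Re}(2k)>2>1$, the same applies at $2k$, and we obtain $h_p(k)=\ln\zeta(k)-\tfrac12\ln\zeta(2k)$.

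The one point needing care, which I expect to be the only real obstacle, is interpreting $\ln$ for complex $k$. I would state explicitly that $\ln\zeta$ denotes the Euler-product branch above, equivalently the analytic continuation of the real logarithm from $(1,\infty)$ along the simply connected, zero-free half-plane $\operatorname{Re}(s)>1$. With that convention the second form $\tfrac12\ln(\zeta(k)^2/\zeta(2k))$ is meant as shorthand for the same combination rather than a principal logarithm of the quotient. For real $k>1$ all quantities are positive and the identity is literal.
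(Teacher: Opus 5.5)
Your proof is correct and follows the same route as the paper. Both arguments write $\arctanh(x)=\tfrac12\ln\frac{1+x}{1-x}$, use $1+p^{-k}=(1-p^{-2k})/(1-p^{-k})$, and apply the Euler product at $k$ and $2k$. The paper forms $\tfrac12\ln\prod_p$ directly, whereas you take principal logarithms prime by prime and sum with an absolute-convergence check. That makes the branch of $\ln\zeta$ explicit for complex $k$, a point the paper's product-level argument leaves implicit.
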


\begin{proof}
Since $\arctanh(x) = \frac{1}{2}\ln\frac{1+x}{1-x}$,
\[
h_p(k) = \tfrac{1}{2}\sum_p \ln\frac{1+p^{-k}}{1-p^{-k}}
= \tfrac{1}{2}\ln\prod_p\frac{1+p^{-k}}{1-p^{-k}}.
\]
The Euler product gives $\prod_p(1-p^{-k})^{-1} = \zeta(k)$.  Since
$(1+p^{-k})(1-p^{-k}) = 1-p^{-2k}$, we have
$\prod_p(1+p^{-k}) = \zeta(k)/\zeta(2k)$.  Therefore
$\prod_p\frac{1+p^{-k}}{1-p^{-k}} = \zeta(k)^2/\zeta(2k)$.
\end{proof}

\begin{remark}[Parity decomposition of $\ln\zeta$]\label{rem:parity}
The logarithmic Euler product
$\ln\zeta(k) = \sum_p\sum_{n=1}^{\infty} p^{-nk}/n$
decomposes by parity:
\begin{align*}
\text{odd powers:} &\quad
  \sum_p\sum_{m=0}^{\infty}\frac{p^{-(2m+1)k}}{2m+1} = h_p(k), \\
\text{even powers:} &\quad
  \sum_p\sum_{j=1}^{\infty}\frac{p^{-2jk}}{2j}
  = \tfrac{1}{2}\ln\zeta(2k).
\end{align*}
Thus $h_p$ is exactly the odd-power part of $\ln\zeta$ over prime
powers, and $\ln\zeta(k) = h_p(k) + \frac{1}{2}\ln\zeta(2k)$
is the parity decomposition.
\end{remark}

\begin{remark}[Dyadic defect analogy]\label{rem:g-zeta-analogy}
Theorem~\ref{thm:main} gives $g(2k) = g(k)^2 e^{2h(k)}$, while the
Euler product yields $\zeta(2k) = \zeta(k)^2 e^{-2h_p(k)}$.  The
opposite sign reflects that $g$ is built from factors $(1-n^{-k})$
while $\zeta$ is the reciprocal of $\prod_p(1-p^{-k})$.  Thus $h$ and
$h_p$ measure the same type of doubling-law failure in their respective
settings.
\end{remark}

\begin{remark}[Logarithmic branches]\label{rem:logbranch}
The identity \eqref{eq:hp} involves $\ln\zeta(k)$, which is single-valued
and real for $k > 1$ but becomes multivalued under analytic continuation
into the critical strip (where $\zeta$ has zeros).  More precisely,
$h_p(k)$ itself extends meromorphically to $\operatorname{Re}(k) > 1/2$
via~\eqref{eq:hp}, but acquires logarithmic branch points at the zeros
of $\zeta(k)$ and $\zeta(2k)$.  For applications involving the critical
strip, it is cleaner to work with the derivative
$h_p'(k) = -\zeta'(k)/\zeta(k) + \zeta'(2k)/\zeta(2k)$, which is
meromorphic with poles at $k = \rho$ and $k = \rho/2$
(see Remark~\ref{rem:improved-convergence}).
\end{remark}

\section{Special values of
\texorpdfstring{$h_p$}{hp}}\label{sec:hpvalues}

At every even integer $k = 2j$, both $\zeta(2j)$ and $\zeta(4j)$ are
rational multiples of $\pi^{2j}$ and $\pi^{4j}$ respectively, so the
powers of $\pi$ cancel in $h_p(2j) = \ln\zeta(2j) -
\frac{1}{2}\ln\zeta(4j)$.

\begin{theorem}\label{thm:hpvalues}
For each integer $j \geq 1$,
$h_p(2j) = \frac{1}{2}\ln r_j$ where
$r_j = \zeta(2j)^2/\zeta(4j) \in \mathbb{Q}$, $r_j > 1$.  By Baker's
theorem, $h_p(2j)$ is transcendental.
\end{theorem}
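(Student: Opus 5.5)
The plan is to prove the three assertions in order: rationality of $r_j$, the inequality $r_j>1$, and transcendence of $h_p(2j)$.

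\emph{Rationality.} Start from Theorem~\ref{thm:hp} at $k=2j$, which is legitimate since $2j\ge 2>1$. It gives $h_p(2j)=\tfrac12\ln\bigl(\zeta(2j)^2/\zeta(4j)\bigr)$. Note that $\zeta(2j)>0$ and $\zeta(4j)>0$, so the real logarithm is the correct branch and no branch issue arises. Then invoke Euler's formula
\[
\zeta(2j)=\frac{(-1)^{j+1}B_{2j}(2\pi)^{2j}}{2(2j)!},
\]
with $B_{2j}\in\mathbb{Q}$. Writing $\zeta(2j)=q_j\pi^{2j}$ and $\zeta(4j)=q_{2j}\pi^{4j}$ with $q_j,q_{2j}\in\mathbb{Q}_{>0}$, we get
\[
r_j=\frac{q_j^2\pi^{4j}}{q_{2j}\pi^{4j}}=\frac{q_j^2}{q_{2j}}\in\mathbb{Q}.
\]
This is the $\pi$-cancellation mechanism described before the theorem. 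One could also record the explicit form $r_j=\tfrac{(4j)!\,B_{2j}^2}{2\,((2j)!)^2\,|B_{4j}|}$, but this is not needed.

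\emph{The bound $r_j>1$.} Each summand $\arctanh(p^{-2j})$ in Definition~\ref{def:hp} is strictly positive, so $h_p(2j)>0$, and therefore $r_j=e^{2h_p(2j)}>1$. Alternatively, the Dirichlet series $\zeta(s)^2/\zeta(2s)=\sum_n 2^{\omega(n)}n^{-s}$ has positive coefficients and constant term $1$, which gives the same conclusion. The key consequence is that $r_j\neq 1$, so $\ln r_j\neq 0$.

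\emph{Transcendence.} Apply the Hermite--Lindemann theorem, or equivalently the one-logarithm case of Baker's theorem: if $\alpha\neq 0,1$ is algebraic, then every nonzero determination of $\log\alpha$ is transcendental. Suppose $\ln r_j=\beta$ were algebraic. Then $\beta\neq 0$ by the previous step, so $e^{\beta}=r_j$ would be transcendental, contradicting $r_j\in\mathbb{Q}$. Hence $\ln r_j$ is transcendental. Finally, $h_p(2j)=\tfrac12\ln r_j$ is transcendental, since the algebraic numbers form a field.

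I expect no step to be a serious obstacle. The only points needing care are to exclude $r_j=1$, which is where positivity is essential, and to confirm that the real logarithm is the correct branch. The heavy input is simply citing Lindemann/Baker correctly.
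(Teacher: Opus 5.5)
Your proposal is correct and follows the paper's own argument. Euler's formula for $\zeta(2j)$ gives the $\pi$-cancellation that makes $r_j$ rational, and the one-logarithm (Lindemann/Baker) transcendence theorem then applies, with your positivity argument for $r_j>1$ (each $\arctanh(p^{-2j})>0$) supplying a step the paper only asserts.
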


\begin{proof}
The rationality of $r_j$ follows from $\zeta(2j) = (-1)^{j+1}
B_{2j}(2\pi)^{2j}/(2\cdot(2j)!)$ (with $B_n$ the Bernoulli numbers),
so $\zeta(2j)^2/\zeta(4j) \in \mathbb{Q}$.  By Baker's
theorem~\cite{baker}, $\ln\alpha$ is transcendental for algebraic
$\alpha \neq 0, 1$.  Since $r_j \in \mathbb{Q}$ and $r_j > 1$,
transcendence follows.
\end{proof}

\begin{remark}[The $\pi$-cancellation mechanism]\label{rem:pi-cancel}
The rationality of $r_j$ reflects a structural cancellation: $\zeta(2j)
\in \mathbb{Q}\cdot\pi^{2j}$, so $\ln\zeta(2j) = 2j\ln\pi +
\ln(\text{rational})$.  Forming $h_p(2j) = \ln\zeta(2j) -
\frac{1}{2}\ln\zeta(4j)$, the $\ln\pi$ terms cancel:
$2j\ln\pi - \frac{1}{2}\cdot 4j\ln\pi = 0$, leaving
$h_p(2j) = \frac{1}{2}\ln r_j$ with $r_j \in \mathbb{Q}$.  The
transcendental content of $\zeta(2j)$ (namely $\pi^{2j}$) cancels
in the dyadic combination, isolating a rational logarithm.
This mechanism has no analogue at odd integers, where $\zeta(2j+1)$ is
not a rational multiple of any power of~$\pi$.
\end{remark}

The first several values:

\medskip
\begin{center}
\begin{tabular}{cccl}
\toprule
$j$ & $k = 2j$ & $r_j$ & $h_p(2j) = \frac{1}{2}\ln r_j$ \\
\midrule
1 & 2 & $5/2$ & $0.45815\ldots$ \\
2 & 4 & $7/6$ & $0.07708\ldots$ \\
3 & 6 & $715/691$ & $0.01707\ldots$ \\
4 & 8 & $7293/7234$ & $0.00408\ldots$ \\
\bottomrule
\end{tabular}
\end{center}
\medskip

\begin{remark}
Compare with the original function: $h(3) = \frac{1}{2}\ln(3/2)$ is the
unique case in Section~\ref{sec:background} where transcendence is unconditional
(via cyclotomic telescoping and Baker's theorem).  For $h_p$,
transcendence at \emph{all} even integers is unconditional, by a
different mechanism (Euler product cancellation of~$\pi$).
\end{remark}

\begin{remark}[The even/odd dichotomy from the Euler
product]\label{rem:evenodd-euler}
At odd integers $k = 2j+1$, the ratio
$\zeta(2j+1)^2/\zeta(4j+2) = 945\cdot\zeta(2j+1)^2/\pi^{4j+2}\cdot(\text{rational})$
is transcendental (it involves $\zeta(2j+1)$ essentially), so Baker's
theorem does not apply to $h_p(2j+1)$.  The reason is direct:
$\zeta(4j+2) \in \mathbb{Q}\pi^{4j+2}$ contributes $\ln\pi$ terms, but
$\zeta(2j+1)$ is not a rational multiple of any power of~$\pi$, so the
$\ln\pi$ terms do not cancel.  This is the same even/odd dichotomy
governing Euler's formula $\zeta(2j) \in \mathbb{Q}\pi^{2j}$, now visible
through the Euler product.
\end{remark}

\begin{remark}[Representation of $\ln\zeta(3)$]\label{rem:lnzeta3}
The identity $h(3) = h_p(3) + h_{\mathrm{comp}}(3)$ with
$h(3) = \frac{1}{2}\ln(3/2)$ and $\zeta(6) = \pi^6/945$ gives
$\ln\zeta(3) = 3\ln\pi - \frac{1}{2}\ln 630 - h_{\mathrm{comp}}(3)$,
expressing $\ln\zeta(3)$ in terms of $\ln\pi$, a rational logarithm,
and the composite sum
$h_{\mathrm{comp}}(3) \approx 0.02730$.
This localizes the unknown transcendence of $\zeta(3)$ in
$h_{\mathrm{comp}}(3)$, but does not simplify the problem, since the
composite sum is no more tractable than $\zeta(3)$ itself.
\end{remark}

\subsection{Composite contribution and dyadic formula}

\begin{proposition}[Composite transcendence dichotomy]\label{prop:trans-dichotomy}
Define the composite arctanh function
$h_{\mathrm{comp}}(k) := h(k) - h_p(k)$,
with closed form
$h_{\mathrm{comp}}(k) = \frac{1}{2}\ln[f(k)\,\zeta(2k)/(2g(k)\,\zeta(k)^2)]$.
For every integer $j \geq 1$, at least one of $h(2j)$ or
$h_{\mathrm{comp}}(2j)$ is transcendental.
\end{proposition}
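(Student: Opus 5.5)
The argument rests on the additive splitting $h(k)=h_p(k)+h_{\mathrm{comp}}(k)$, which holds by definition of $h_{\mathrm{comp}}$. Combined with Theorem~\ref{thm:hpvalues}, it reduces the claim to an elementary transcendence dichotomy. Fix $j\ge 1$ and put $k=2j$. Every term $\arctanh(n^{-2j})$ converges absolutely, so the sum over $n\ge2$ splits into its prime and composite parts:
\[
h(2j)=h_p(2j)+h_{\mathrm{comp}}(2j).
\]

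First I would check that the closed form of $h_{\mathrm{comp}}$ stated in the proposition agrees with the definition. Subtracting $h_p(k)=\tfrac12\ln\bigl(\zeta(k)^2/\zeta(2k)\bigr)$ (Theorem~\ref{thm:hp}) from $h(k)=\tfrac12\ln\bigl(f(k)/(2g(k))\bigr)$ (Theorem~\ref{thm:main}) gives exactly
\[
\tfrac12\ln\!\Bigl[\frac{f(k)\,\zeta(2k)}{2g(k)\,\zeta(k)^2}\Bigr].
\]
For the dichotomy itself this identity is cosmetic; only the additive splitting is needed.

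Next I would invoke Theorem~\ref{thm:hpvalues}: $h_p(2j)=\tfrac12\ln r_j$ with $r_j\in\mathbb{Q}$ and $r_j>1$. By Baker's theorem (in fact Lindemann already suffices), $h_p(2j)$ is transcendental.

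The conclusion then follows by contradiction. Suppose both $h(2j)$ and $h_{\mathrm{comp}}(2j)$ are algebraic. The algebraic numbers form a field, so their difference $h_p(2j)=h(2j)-h_{\mathrm{comp}}(2j)$ would be algebraic, which contradicts the previous step. Hence at least one of $h(2j)$ and $h_{\mathrm{comp}}(2j)$ is transcendental.

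There is no genuine obstacle here. The only points needing care are that the rearrangement into prime and composite sums is legitimate, which follows from absolute convergence for $k=2j\ge2$, and that the real logarithm is used throughout, which is fine because every quantity involved is real and positive for real $k>1$.
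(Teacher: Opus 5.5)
Your proof is correct and takes the same approach as the paper. You split $h(2j)=h_p(2j)+h_{\mathrm{comp}}(2j)$, take the transcendence of $h_p(2j)=\tfrac12\ln r_j$ from Theorem~\ref{thm:hpvalues}, and conclude that $h(2j)$ and $h_{\mathrm{comp}}(2j)$ cannot both be algebraic, since algebraic numbers are closed under subtraction; your remark that Hermite--Lindemann already suffices for $\ln r_j$ (as $r_j\in\mathbb{Q}$, $r_j\neq 1$) is accurate.
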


\begin{proof}
The identity $h(2j) = h_p(2j) + h_{\mathrm{comp}}(2j)$ expresses the
transcendental number $h_p(2j) = \frac{1}{2}\ln r_j$
(Theorem~\ref{thm:hpvalues}) as a sum.  If both summands were algebraic,
their sum would be algebraic, a contradiction.
\end{proof}

\begin{remark}[Limits of current methods]\label{rem:dichotomy-limits}
The dichotomy cannot be resolved by existing techniques.  By Nesterenko's
theorem~\cite{nesterenko}, $f(2)/(2g(2)) = \sinh(\pi)/\pi$ is
transcendental, but $\ln(\text{transcendental})$ can be algebraic
(e.g., $\ln(e^{\sqrt{2}}) = \sqrt{2}$).
Resolving the dichotomy would require tools beyond Baker and Nesterenko;
Schanuel's conjecture would suffice.
\end{remark}

\begin{proposition}[Dyadic formula for $\ln\zeta$]\label{prop:dyadic}
For $\operatorname{Re}(k) > 1$,
\begin{equation}\label{eq:dyadic}
\ln\zeta(k) = \sum_{j=0}^{\infty} \frac{1}{2^j}\, h_p(2^j k)
= \sum_{j=0}^{\infty}\frac{1}{2^j}
  \sum_p \arctanh(p^{-2^j k}).
\end{equation}
The convergence is doubly exponential: $h_p(2^j k) = O(2^{-2^j
\operatorname{Re}(k)})$.
\end{proposition}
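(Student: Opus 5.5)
The plan is to telescope the parity decomposition of Remark~\ref{rem:parity}. By Theorem~\ref{thm:hp}, for $\operatorname{Re}(k)>1$ we have
\[
h_p(k)=\ln\zeta(k)-\tfrac12\ln\zeta(2k).
\]
Since $\operatorname{Re}(2^j k)>1$ for every $j\ge0$, this identity may be applied at $2^j k$. Multiplying by $2^{-j}$ gives
\[
2^{-j}h_p(2^jk)=2^{-j}\ln\zeta(2^jk)-2^{-(j+1)}\ln\zeta(2^{j+1}k).
\]
Summing over $0\le j\le J$, the right-hand side telescopes to $\ln\zeta(k)-2^{-(J+1)}\ln\zeta(2^{J+1}k)$. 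It then suffices to show that the boundary term tends to $0$.

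For the boundary term, I would use the branch of $\ln\zeta$ on $\operatorname{Re}(s)>1$ given by the logarithmic Euler product, $\ln\zeta(s)=\sum_p\sum_{n\ge1}p^{-ns}/n$. This is the branch implicit in Theorem~\ref{thm:hp}, so there are no branch ambiguities. For $\sigma=\operatorname{Re}(s)\ge 2$ it gives $|\ln\zeta(s)|\le \ln\zeta(\sigma)\le \zeta(\sigma)-1=O(2^{-\sigma})$. Since $\operatorname{Re}(2^{J+1}k)\to\infty$, the boundary term is $O\bigl(2^{-(J+1)}\,2^{-2^{J+1}\operatorname{Re}(k)}\bigr)\to0$. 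Actually, even boundedness of $\ln\zeta$ on $\operatorname{Re}(s)\ge\operatorname{Re}(2k)$ would suffice, because of the factor $2^{-(J+1)}$. This proves the first equality in \eqref{eq:dyadic}. The second equality is just Definition~\ref{def:hp} applied at $2^jk$.

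For the convergence rate, I would bound termwise. For $|x|\le 1/2$ we have $|\arctanh(x)|\le |x|+|x|^3/(3(1-|x|^2))\le 2|x|$. Hence
\[
|h_p(s)|\le 2\sum_p p^{-\operatorname{Re}(s)}\le 2\bigl(\zeta(\operatorname{Re} s)-1\bigr)=O\bigl(2^{-\operatorname{Re} s}\bigr)
\]
for $\operatorname{Re}(s)\ge2$. The finitely many $j$ with $2^j\operatorname{Re}(k)<2$ are harmless; they are finite because the series defining $h_p$ converges absolutely for $\operatorname{Re}(s)>1$. Setting $s=2^jk$ gives $h_p(2^jk)=O(2^{-2^j\operatorname{Re}(k)})$, which is the stated doubly exponential decay and also gives absolute convergence of the outer sum.

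The only delicate point is matching branches of the logarithm for complex $k$. This is handled by defining $\ln\zeta$ via the Euler-product series throughout, so that Theorem~\ref{thm:hp} holds as an identity of holomorphic functions on $\operatorname{Re}(k)>1$ rather than modulo $2\pi i$. Alternatively, one can sum the parity decomposition directly: every prime power exponent $N=2^j(2m+1)$ is reached exactly once, with weight $2^{-j}/(2m+1)=1/N$. Absolute convergence of $\sum_p\sum_N p^{-Nk}/N$ then justifies rearranging the double sum, which gives a branch-free second proof.
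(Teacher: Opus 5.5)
Your proof is correct and follows the same route as the paper: iterate (telescope) $\ln\zeta(k)=h_p(k)+\tfrac12\ln\zeta(2k)$ at $k,2k,4k,\dots$ and show that the boundary term $2^{-N}\ln\zeta(2^Nk)$ tends to $0$. The other parts of your write-up only supply details that the paper's one-line proof leaves implicit: the Euler-product branch, the explicit bound on the boundary term, and the termwise estimate $|h_p(s)|\le 2(\zeta(\operatorname{Re}s)-1)$, which gives the stated doubly exponential decay.
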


\begin{proof}
Iterate $\ln\zeta(k) = h_p(k) + \frac{1}{2}\ln\zeta(2k)$ and use
$2^{-N}\ln\zeta(2^N k) \to 0$ as $N \to \infty$.
\end{proof}

\section{The Hadamard product and the nontrivial
zeros}\label{sec:hadamard}

The closed form \eqref{eq:hp} connects $h_p$ to the zeros of $\zeta$
through the Hadamard product.  We use the canonical product for the
completed zeta function (Titchmarsh~\cite{titchmarsh}, \S\S2.12--2.13;
Davenport~\cite{davenport}, Ch.~12; Ivi\'c~\cite{ivic}, Ch.~1).

\begin{equation}\label{eq:xi-def}
\xi(s) := \tfrac{1}{2}s(s-1)\pi^{-s/2}\Gamma(s/2)\,\zeta(s),
\end{equation}
which is entire of order~$1$ with $\xi(0) = 1/2$.  Since $\xi$ has
order~$1$, the Hadamard factorization theorem gives the genus-$1$
product
\begin{equation}\label{eq:hadamard-product}
\xi(s) = e^{A+Bs}\prod_{\rho}\!\left(1 - \frac{s}{\rho}\right)
  e^{s/\rho},
\end{equation}
where $A = \ln\xi(0) = -\ln 2$, $B = -\sum_\rho\operatorname{Re}(1/\rho)
+ \frac{1}{2}\ln\pi - 1 - \frac{1}{2}\gamma$ (the exact value being
$B = -0.0230957\ldots$; see Davenport~\cite{davenport}, p.~82), and the
product converges absolutely.  The exponential factors $e^{s/\rho}$
ensure absolute convergence of the product; without them, convergence
would be only conditional.

Taking logarithmic derivatives:
\begin{equation}\label{eq:logderiv-xi}
\frac{\xi'}{\xi}(s) = B + \sum_{\rho}\left[\frac{1}{s-\rho}
  + \frac{1}{\rho}\right].
\end{equation}
On the other hand, from~\eqref{eq:xi-def}:
\begin{equation}\label{eq:logderiv-zeta}
-\frac{\zeta'}{\zeta}(s) = \frac{1}{s-1} - B_0
  - \sum_{\rho}\left[\frac{1}{s-\rho} + \frac{1}{\rho}\right]
  + \text{(trivial-zero terms)},
\end{equation}
where $B_0 = B + 1 + \frac{1}{2}\gamma - \frac{1}{2}\ln\pi
= \ln 2 + \frac{1}{2}\ln\pi - 1 - \frac{1}{2}\gamma_0$
(with $\gamma_0 = \gamma$).  Integrating~\eqref{eq:logderiv-zeta}
from a reference point $s_0$ to~$s$ gives
\begin{equation}\label{eq:lnzeta-hadamard}
\ln\zeta(s) = -\ln(s-1)
  + \sum_{\rho}\!\left[\ln\!\left(1 - \frac{s}{\rho}\right)
  + \frac{s}{\rho}\right]
  + B_0 s + C_0 + (\text{gamma and $\pi$ terms}),
\end{equation}
where the sum now converges absolutely (each term is
$\ln(1 - s/\rho) + s/\rho = O(|s/\rho|^2)$).

For the combination $h_p(k) = \ln\zeta(k) - \frac{1}{2}\ln\zeta(2k)$,
the crucial cancellation is:
\begin{align}\label{eq:cancellation}
&\left[\ln\!\left(1 - \frac{k}{\rho}\right) + \frac{k}{\rho}\right]
- \frac{1}{2}\left[\ln\!\left(1 - \frac{2k}{\rho}\right)
  + \frac{2k}{\rho}\right] \notag\\
&\qquad= \ln\!\left(1 - \frac{k}{\rho}\right)
  - \frac{1}{2}\ln\!\left(1 - \frac{2k}{\rho}\right),
\end{align}
because the linear terms cancel: $k/\rho - \frac{1}{2}\cdot 2k/\rho = 0$.
Thus the genus-$1$ exponential factors $e^{s/\rho}$ (which produce the
$s/\rho$ terms in the logarithm) drop out of $h_p$ identically,
and we are left with a sum that converges absolutely with
$O(|\rho|^{-2})$ decay.

\begin{theorem}[Product formula over zeros]\label{thm:zeros-product}
For real $k > 1$, the prime arctanh function satisfies
\begin{equation}\label{eq:hp-zeros}
h_p(k) = \sum_{\rho}\!\left[\ln\!\left(1 - \frac{k}{\rho}\right)
  - \tfrac{1}{2}\ln\!\left(1 - \frac{2k}{\rho}\right)\right] + E(k),
\end{equation}
where the sum runs over all nontrivial zeros $\rho$ of $\zeta$ and
converges absolutely, and $E(k)$ collects the non-zero terms from the
completed zeta function.  By the Legendre duplication formula
$\Gamma(k) = 2^{k-1}\pi^{-1/2}\Gamma(k/2)\Gamma((k+1)/2)$, the six-term
expression $\frac{1}{2}\ln\Gamma(k) - \ln\Gamma(k/2) - \frac{1}{2}\ln k
+ \ln 2 - \ln(k-1) + \frac{1}{2}\ln(2k-1)$ simplifies to
\begin{equation}\label{eq:Ek-explicit}
E(k) = \ln\!\left(
  \frac{2^{(k+1)/2}\,(2k-1)^{1/2}}
       {\pi^{1/4}\,(k-1)\,k^{1/2}}
  \;\sqrt{\frac{\Gamma\!\bigl(\frac{k+1}{2}\bigr)}
              {\Gamma\!\bigl(\frac{k}{2}\bigr)}}
\right).
\end{equation}
Each term in the zero sum satisfies
\begin{equation}\label{eq:zero-term-decay}
\ln\!\left(1 - \frac{k}{\rho}\right)
- \tfrac{1}{2}\ln\!\left(1 - \frac{2k}{\rho}\right)
= \frac{k^2}{2\rho^2} + O(k^3|\rho|^{-3}),
\end{equation}
giving $O(|\operatorname{Im}(\rho)|^{-2})$ decay per term and a tail
estimate $\sum_{|\gamma| > T} = O(\log T / T)$.
\end{theorem}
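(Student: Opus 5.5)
The plan is to avoid integrating the logarithmic derivative \eqref{eq:logderiv-zeta}, whose constants $B_0, C_0$ are awkward to track. Instead I would start directly from the Hadamard product \eqref{eq:hadamard-product} and the definition \eqref{eq:xi-def}. For real $s>1$ we have
\[
\zeta(s)=\frac{2\,\pi^{s/2}\,\xi(s)}{s(s-1)\Gamma(s/2)},
\]
so
\[
\ln\zeta(s)=\ln 2+\tfrac{s}{2}\ln\pi-\ln s-\ln(s-1)-\ln\Gamma(s/2)+A+Bs+\sum_\rho\Bigl[\ln\bigl(1-\tfrac{s}{\rho}\bigr)+\tfrac{s}{\rho}\Bigr].
\]
I would pair each $\rho$ with $\bar\rho$, so that each paired term is real for real $s$ and the principal logarithm is legitimate. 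This is needed because $1-s/\rho\neq0$ for real $s>1$, since $0<\operatorname{Re}\rho<1$. I would then justify taking the logarithm of the absolutely convergent product term by term. The cleanest route is to note that both sides are real-analytic in $s>1$, have the same derivative by \eqref{eq:logderiv-xi}, and agree as $s\to 1^+$ up to the constant $A$. Alternatively, one can take the logarithm of a finite partial product and pass to the limit.

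Next I would form $h_p(k)=\ln\zeta(k)-\tfrac12\ln\zeta(2k)$ via Theorem~\ref{thm:hp}. Three cancellations occur. The linear terms cancel, $Bk-\tfrac12 B(2k)=0$, as do the $s/\rho$ terms, exactly as in \eqref{eq:cancellation}. The $\pi$-powers cancel as well, since $\tfrac k2\ln\pi-\tfrac12\cdot\tfrac{2k}{2}\ln\pi=0$. What survives is the zero sum in \eqref{eq:hp-zeros} plus
\[
E(k)=\tfrac12 A+\ln 2-\ln k-\ln(k-1)-\ln\Gamma(k/2)-\tfrac12\bigl[\ln 2-\ln(2k)-\ln(2k-1)-\ln\Gamma(k)\bigr].
\]
Substituting $A=-\ln2$ and expanding $\ln(2k)$ gives an expression in $\ln\Gamma(k)$, $\ln\Gamma(k/2)$, $\ln k$, $\ln(k-1)$, $\ln(2k-1)$ and a multiple of $\ln 2$. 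I would then apply the duplication formula
\[
\tfrac12\ln\Gamma(k)=\tfrac{k-1}{2}\ln 2-\tfrac14\ln\pi+\tfrac12\ln\Gamma(k/2)+\tfrac12\ln\Gamma\bigl(\tfrac{k+1}{2}\bigr).
\]
This turns $\tfrac12\ln\Gamma(k)-\ln\Gamma(k/2)$ into $\tfrac12\ln\bigl(\Gamma(\tfrac{k+1}2)/\Gamma(\tfrac k2)\bigr)$ plus powers of $2$ and $\pi^{-1/4}$, and should reproduce \eqref{eq:Ek-explicit}. I would recheck the constant power of $2$ carefully against the stated six-term expression, since the bookkeeping of $A$ and of $\ln(2k)$ is where an error would creep in. As a sanity check, the formula can be compared numerically at $k=2$, using $h_p(2)=\tfrac12\ln(5/2)$ from Theorem~\ref{thm:hpvalues}.

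For \eqref{eq:zero-term-decay}, I would Taylor expand $\ln(1-x)=-x-x^2/2-O(x^3)$ with $x=k/\rho$ and with $x=2k/\rho$, valid for $|\rho|>4k$. The linear terms cancel, and the quadratic terms give $-\tfrac{k^2}{2\rho^2}+\tfrac12\cdot\tfrac{4k^2}{2\rho^2}=\tfrac{k^2}{2\rho^2}$, with remainder $O(k^3|\rho|^{-3})$. The finitely many zeros with $|\rho|\le 4k$ contribute bounded terms. Absolute convergence then follows from $\sum_\rho|\rho|^{-2}<\infty$, which holds because $\xi$ has order $1$. Since $|\rho|\asymp|\operatorname{Im}\rho|$, the terms decay like $|\operatorname{Im}\rho|^{-2}$.

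For the tail, I would use Riemann--von Mangoldt, $N(T)=\frac{T}{2\pi}\ln\frac{T}{2\pi}-\frac{T}{2\pi}+O(\ln T)$, together with partial summation:
\[
\sum_{|\gamma|>T}|\gamma|^{-2}=\int_T^\infty t^{-2}\,dN(t)\ll \frac{N(T)}{T^2}+\int_T^\infty\frac{N(t)}{t^3}\,dt\ll\frac{\ln T}{T}.
\]
The main obstacle I expect is the branch and termwise-logarithm justification in the first step. The analytic estimates are routine; the substantive risk lies in an arithmetic slip in the constant part of $E(k)$.
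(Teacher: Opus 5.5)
Your route is the paper's: Hadamard product at $s=k$ and $s=2k$, cancellation of the $Bs$, $s/\rho$ and $\ln\pi$ terms, Legendre duplication, then Taylor expansion and Riemann--von Mangoldt for the decay and tail. The analytic parts are fine. The step that fails is the last piece of constant bookkeeping, where you say your expression ``should reproduce'' \eqref{eq:Ek-explicit}. It does not.

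Substituting $A=-\ln 2$ into your formula, the coefficient of $\ln 2$ is $-\tfrac12+1-\tfrac12+\tfrac12=\tfrac12$. So you obtain
\[
E(k)=\tfrac12\ln\Gamma(k)-\ln\Gamma(\tfrac k2)-\tfrac12\ln k+\tfrac12\ln 2-\ln(k-1)+\tfrac12\ln(2k-1)
=\ln\biggl(\frac{2^{k/2}(2k-1)^{1/2}}{\pi^{1/4}(k-1)k^{1/2}}\sqrt{\frac{\Gamma(\frac{k+1}{2})}{\Gamma(\frac{k}{2})}}\biggr),
\]
with $2^{k/2}$ where the statement has $2^{(k+1)/2}$. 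Your bookkeeping is the correct one: the stated six-term expression and \eqref{eq:Ek-explicit} are too large by exactly $\tfrac12\ln 2$. The paper's proof gets $+\ln 2$ because, when it collects the elementary terms of $\ln[\tfrac12 s(s-1)\pi^{-s/2}\Gamma(s/2)]$, it identifies $\ln\xi(k)-\tfrac12\ln\xi(2k)$ with the zero sum alone. It thereby drops $A-\tfrac12A=\tfrac12A=-\tfrac12\ln 2$. Equivalently, $e^{A}=\xi(0)=\tfrac12$ cancels the $\tfrac12$ in \eqref{eq:xi-def}, giving $\zeta(s)=e^{Bs}\pi^{s/2}\prod_\rho(1-s/\rho)e^{s/\rho}/\bigl(s(s-1)\Gamma(s/2)\bigr)$ with no stray constant.

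The $k=2$ check you propose confirms this. The corrected value $E(2)=\tfrac12\ln 3$ requires the zero sum to equal $\tfrac12\ln\tfrac56\approx-0.09116$. Expanding each term as $\sum_{n\ge 2}(2^{n-1}-1)k^n/(n\rho^n)$ gives
\[
2\sum_\rho\rho^{-2}+8\sum_\rho\rho^{-3}+28\sum_\rho\rho^{-4}+\cdots\approx-0.0923-0.0009+0.0021\approx-0.0911,
\]
using $\sum_\rho\rho^{-2}\approx-0.04615$. The stated $E(2)=\tfrac12\ln 6$ would instead require the zero sum to be $\tfrac12\ln\tfrac{5}{12}\approx-0.438$. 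So the theorem as written cannot be proved. What your argument establishes is the version with $2^{k/2}$, and the same correction applies to $E(2j)$ in Corollary~\ref{cor:zerofamily}.

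A smaller point concerns the termwise-logarithm step. Equal derivatives on $s>1$ do not fix the constant ``as $s\to1^+$''. Instead, compare at $s=0$, where both sides of $\ln\xi(s)=A+Bs+\sum_\rho[\ln(1-s/\rho)+s/\rho]$ equal $A$. Alternatively, simply note that the right side is real for real $s$ and exponentiates to $\xi(s)>0$.
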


\begin{proof}
Starting from the genus-$1$ Hadamard product~\eqref{eq:hadamard-product},
write $\ln\zeta(s)$ in terms of the absolutely convergent sum
$\sum_\rho[\ln(1-s/\rho) + s/\rho]$ plus elementary terms from the
$\Gamma$-function and $\pi^{-s/2}$ factors in~\eqref{eq:xi-def}.
Evaluate at $s = k$ and $s = 2k$, then form
$h_p(k) = \ln\zeta(k) - \frac{1}{2}\ln\zeta(2k)$.

By~\eqref{eq:cancellation}, the $s/\rho$ regularization terms cancel in
the combination, leaving the zero sum in~\eqref{eq:hp-zeros}.  The linear
terms $B_0 s$ also cancel: $B_0 k - \frac{1}{2}B_0(2k) = 0$.  The
elementary terms from $\ln[\frac{1}{2}s(s-1)\pi^{-s/2}\Gamma(s/2)]$
combine into~$E(k)$: from $\ln\zeta(k)$, collect
$-\ln(k/2) - \ln(k-1) + \frac{k}{2}\ln\pi - \ln\Gamma(k/2)$; from
$-\frac{1}{2}\ln\zeta(2k)$, collect
$\frac{1}{2}\ln k + \frac{1}{2}\ln(2k-1) - \frac{k}{2}\ln\pi
+ \frac{1}{2}\ln\Gamma(k)$.  The $\ln\pi$ terms cancel:
$\frac{k}{2}\ln\pi - \frac{k}{2}\ln\pi = 0$.  Next,
$-\ln(k/2) + \frac{1}{2}\ln k = -\frac{1}{2}\ln k + \ln 2$, giving the
intermediate form
\[
E(k) = \tfrac{1}{2}\ln\Gamma(k) - \ln\Gamma(k/2)
  - \tfrac{1}{2}\ln k + \ln 2 - \ln(k-1) + \tfrac{1}{2}\ln(2k-1).
\]
The Legendre duplication formula
$\Gamma(k) = 2^{k-1}\pi^{-1/2}\Gamma(k/2)\,\Gamma((k+1)/2)$ gives
$\frac{1}{2}\ln\Gamma(k) = \frac{k-1}{2}\ln 2 - \frac{1}{4}\ln\pi
+ \frac{1}{2}\ln\Gamma(k/2) + \frac{1}{2}\ln\Gamma((k+1)/2)$.
Substituting and combining logarithms yields~\eqref{eq:Ek-explicit}.

\smallskip\noindent
\textit{Absolute convergence.}\;
For $|\gamma| = |\operatorname{Im}(\rho)| \to \infty$ with $k$ fixed,
expand:
\begin{align*}
\ln\!\left(1 - \frac{k}{\rho}\right)
&= -\frac{k}{\rho} - \frac{k^2}{2\rho^2} - \frac{k^3}{3\rho^3}
  - \cdots, \\
\tfrac{1}{2}\ln\!\left(1 - \frac{2k}{\rho}\right)
&= -\frac{k}{\rho} - \frac{k^2}{\rho^2}
  - \frac{4k^3}{3\rho^3} - \cdots.
\end{align*}
The $O(1/\rho)$ terms cancel, leaving
$k^2/(2\rho^2) + O(k^3/|\rho|^3)$.  Since $|\rho| \geq |\gamma|$ and
$N(T) := \#\{\rho : 0 < \gamma \leq T\} = (T/2\pi)\log(T/2\pi e)
+ O(\log T)$, summation by parts gives
$\sum_\rho |\gamma|^{-2} < \infty$ and the stated tail bound.
\end{proof}

\begin{corollary}[Family of zero-sum identities]\label{cor:zerofamily}
For each integer $j \geq 1$, with $r_j = \zeta(2j)^2/\zeta(4j) \in \mathbb{Q}$
from Theorem~\ref{thm:hpvalues}:
\begin{equation}\label{eq:rj-zeros}
\tfrac{1}{2}\ln r_j
= \sum_{\rho}\!\left[\ln\!\left(1 - \frac{2j}{\rho}\right)
  - \tfrac{1}{2}\ln\!\left(1 - \frac{4j}{\rho}\right)\right] + E(2j),
\end{equation}
where $E(2j)$ is given by~\eqref{eq:Ek-explicit} and the sum converges
absolutely.  Each identity relates a transcendental rational logarithm
(left side, by Baker's theorem) to an absolutely convergent sum over the
nontrivial zeros plus an explicit elementary constant.
\end{corollary}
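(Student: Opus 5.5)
The corollary is Theorem~\ref{thm:zeros-product} evaluated at the even integers $k=2j$, combined with Theorem~\ref{thm:hpvalues}. I would argue in three steps.

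\textbf{Step 1 (specialization).} Since $j\ge1$, we have $k=2j\ge 2>1$, so $k=2j$ lies in the real range $k>1$ where Theorem~\ref{thm:zeros-product} holds. Substituting $k=2j$ into \eqref{eq:hp-zeros} expresses $h_p(2j)$ as
\[
\sum_\rho\Bigl[\ln\bigl(1-\tfrac{2j}{\rho}\bigr)-\tfrac12\ln\bigl(1-\tfrac{4j}{\rho}\bigr)\Bigr]+E(2j).
\]
The factor $(k-1)$ in $E(k)$ equals $2j-1\ge1$, so \eqref{eq:Ek-explicit} is finite there. The $\Gamma$ values are also harmless: $\Gamma(j+\tfrac12)=\tfrac{(2j)!}{4^j j!}\sqrt{\pi}$ and $\Gamma(j)=(j-1)!$. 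So $E(2j)$ is an explicit elementary constant, a rational power of $2$ and $\pi$ times rationals, inside a logarithm. One could simplify it further, but this is not needed for the identity.

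\textbf{Step 2 (left side).} By Theorem~\ref{thm:hpvalues}, $h_p(2j)=\tfrac12\ln r_j$ with $r_j=\zeta(2j)^2/\zeta(4j)\in\mathbb{Q}$ and $r_j>1$. Equating this with Step~1 gives \eqref{eq:rj-zeros}.

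\textbf{Step 3 (absolute convergence and transcendence).} Absolute convergence is inherited from \eqref{eq:zero-term-decay} at fixed $k=2j$. Each term is $O(j^2|\rho|^{-2})$, and $\sum_\rho|\rho|^{-2}<\infty$ by the zero-counting estimate quoted in the proof of the theorem. Transcendence of the left side is exactly the Baker-theorem conclusion already recorded in Theorem~\ref{thm:hpvalues}.

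\textbf{Main obstacle.} The only real subtlety is the branch of the logarithms for complex $\rho$. The term $\ln(1-4j/\rho)$ must use the branch consistent with the Hadamard derivation. The natural choice is the principal branch. This is legitimate because $\operatorname{Re}(1-s/\rho)=1-s\operatorname{Re}(\rho)/|\rho|^2$, and this can fail to be positive only for the finitely many zeros with $|\rho|^2\le s\operatorname{Re}\rho<s$. For the principal branch to be valid at every zero, we need $|\rho|^2>4j\operatorname{Re}\rho$; since $0<\operatorname{Re}\rho<1$, it suffices that $|\rho|^2>4j$.
\begin{itemize}
\item Every nontrivial zero has $|\operatorname{Im}\rho|>14$, so $|\rho|^2>196$.
\item This settles all $j\le 49$ outright.
\end{itemize}
For $j\ge 50$ the condition can fail for finitely many low-lying zeros. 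For those zeros I would not use the principal branch, but interpret $\ln(1-s/\rho)$ along the path from $0$ to $s$ used to integrate \eqref{eq:logderiv-zeta}. The integrated identity \eqref{eq:lnzeta-hadamard} then holds as written with that path-consistent branch. So I would state that the branches are the ones inherited from Theorem~\ref{thm:zeros-product}, and pair the zeros $\rho$ and $\bar\rho$ so that the sum is manifestly real.
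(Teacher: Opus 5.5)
Your reduction is the one the paper intends: Theorem~\ref{thm:zeros-product} at $k=2j$ plus Theorem~\ref{thm:hpvalues}. The problem is the step you wave through, substituting into \eqref{eq:hp-zeros} and declaring that working out $E(2j)$ is ``not needed''. The constant in \eqref{eq:Ek-explicit} is too large by $\tfrac{1}{2}\ln 2$, so the identity you would be certifying is false as stated.

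Write $\ln\zeta(s)=\ln\xi(s)-\ln\bigl(\tfrac{1}{2}s(s-1)\bigr)+\tfrac{s}{2}\ln\pi-\ln\Gamma(s/2)$, with $\ln\xi(s)=A+Bs+\sum_\rho[\ln(1-s/\rho)+s/\rho]$ and $A=\ln\xi(0)=-\ln 2$. In $\ln\zeta(k)-\tfrac{1}{2}\ln\zeta(2k)$ only the \emph{linear} term cancels. The constant contributes $A-\tfrac{1}{2}A=-\tfrac{1}{2}\ln 2$. The derivation of \eqref{eq:Ek-explicit} keeps the $\ln 2$'s coming from the factor $\tfrac{1}{2}$ in $\xi$ (through $-\ln(k/2)$ and $+\tfrac{1}{2}\ln k$), but drops this compensating constant. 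The correct remainder is therefore $E(k)-\tfrac{1}{2}\ln 2$; that is, $2^{(k+1)/2}$ should read $2^{k/2}$. As a self-contained check, once $A$ is included, $\ln\zeta(s)=Bs+\sum_\rho[\ln(1-s/\rho)+s/\rho]-\ln s-\ln(s-1)+\tfrac{s}{2}\ln\pi-\ln\Gamma(s/2)$ with no further constant. This tends to a logarithm of $-\tfrac{1}{2}=\zeta(0)$ as $s\to 0$, which any extra additive constant would spoil.

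Concretely, at $j=1$ formula \eqref{eq:Ek-explicit} gives $E(2)=\tfrac{1}{2}\ln 6$, so \eqref{eq:rj-zeros} would force the zero sum to equal $\tfrac{1}{2}\ln(5/12)\approx-0.438$. Expanding in powers of $1/\rho$, that sum is $\sum_{n\ge 2}\frac{2^{n}(2^{n-1}-1)}{n}\sum_\rho\rho^{-n}$. Its leading term is $2\sum_\rho\rho^{-2}\approx-0.0923$, using $\sum_\rho\rho^{-2}=1+\gamma^2+2\gamma_1-\pi^2/8\approx-0.04615$, where $\gamma$ is Euler's constant and $\gamma_1$ the first Stieltjes constant. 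The remaining terms add only about $+0.001$. The total $\approx-0.09116=\tfrac{1}{2}\ln(5/6)$ matches the corrected constant $\tfrac{1}{2}\ln 3$, not $\tfrac{1}{2}\ln 6$.

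Had you finished the simplification you began (the $\pi^{1/4}$'s cancel via $\Gamma(j+\tfrac{1}{2})=\frac{(2j)!}{4^{j}\,j!}\sqrt{\pi}$), you would have obtained the corrected value $\tfrac{1}{2}\ln\bigl(\frac{4j-1}{(2j-1)^2}\binom{2j-1}{j}\bigr)$ and could have tested it at $j=1$. With that replacement, your convergence and transcendence steps go through unchanged.

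Separately, your branch ``obstacle'' is illusory. For real $s>0$ and non-real $\rho$, the segment $\{1-t/\rho: 0\le t\le s\}$ meets $\mathbb{R}$ only at its starting point $1$, so it never touches $(-\infty,0]$. The path-consistent branch is therefore the principal branch for every zero and every $j$, and no exceptional set for $j\ge 50$ arises.
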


\begin{remark}[Improved convergence over standard
formulas]\label{rem:improved-convergence}
The derivative
\[
  h_p'(k) = -\zeta'(k)/\zeta(k) + \zeta'(2k)/\zeta(2k)
\]
admits the explicit formula
\begin{equation}\label{eq:hpprime-explicit}
h_p'(k) = \frac{1}{k-1} - \frac{2}{2k-1}
  - \sum_{\rho}\frac{\rho}{(k-\rho)(2k-\rho)} + E'(k).
\end{equation}
Each term decays as $O(|\gamma|^{-2})$, so the zero sum converges
absolutely without the symmetric pairing or $1/\rho$ regularization
required by the standard formula for $-\zeta'/\zeta(s)$.  The
subtraction of $\frac{1}{2}\ln\zeta(2k)$ in the definition of $h_p$
cancels the $1/\rho$ regularization terms automatically and reduces
each zero's contribution from $O(|\gamma|^{-1})$ to
$O(|\gamma|^{-2})$.
\end{remark}

\begin{remark}\label{rem:hpsensitivity}
The derivative $h_p'$ is a meromorphic function whose poles include
$k = \rho$ and $k = \rho/2$ for every nontrivial zero~$\rho$ of~$\zeta$.
This illustrates the fundamental divide: $h$ (the sum over all integers)
is an additive object sensitive only to the pole of $\zeta$; $h_p$ (the
sum over primes) is a multiplicative object sensitive to the zeros.
\end{remark}

\section{M\"obius inversion for \texorpdfstring{$h$}{h}}
\label{sec:mobius}

Before establishing the inversion formula, we collect the structural
parallels between the additive and multiplicative theories.

\begin{center}
\begin{tabular}{@{}p{3.8cm}p{5cm}p{5cm}@{}}
\toprule
& \textbf{$h(k) = \sum_{n \geq 2}$} & \textbf{$h_p(k) = \sum_p$} \\
\midrule
Defining series
  & $\sum\arctanh(n^{-k})$
  & $\sum\arctanh(p^{-k})$ \\[4pt]
Zeta-series
  & $\sum[\zeta((2m\!+\!1)k)\!-\!1]/(2m\!+\!1)$
  & $\sum P((2m\!+\!1)k)/(2m\!+\!1)$ \\[4pt]
Closed form
  & $\frac{1}{2}\ln\bigl(f(k)/2g(k)\bigr)$
  & $\ln\zeta(k) - \frac{1}{2}\ln\zeta(2k)$ \\[4pt]
Product identity
  & gamma function products
  & Euler product \\[4pt]
Singularities
  & simple poles at $1/(2m\!+\!1)$
  & log branch points at $\rho$, $\rho/2$ \\[4pt]
Sensitive to
  & pole of $\zeta$ at $s=1$
  & zeros of $\zeta$ \\[4pt]
Transcendence
  & $h(3) = \frac{1}{2}\ln(3/2)$
  & $h_p(2j) = \frac{1}{2}\ln r_j$, all $j$ \\
\bottomrule
\end{tabular}
\end{center}

\begin{remark}[Comparison with $P(s)$]\label{rem:primezeta}
The prime zeta function $P(s) = \sum_p p^{-s}$ also bridges additive
and multiplicative structure via
$P(s) = \sum_{n} \mu(n)\ln\zeta(ns)/n$.  Both $h_p$ and $P$ have
singularities at $s = \rho/n$ (from the zeros of $\zeta$), but $P$'s
are logarithmic branch points while $h_p$'s appear only in the
derivative.  The function $h$ has an analogous accumulation of
singularities at the origin, but they are simple poles from the
\emph{pole} of $\zeta$---not from its zeros.
\end{remark}

The zeta-series~\eqref{eq:hk-zeta-prop} sums over the odd positive integers
$\mathcal{O} = \{1,3,5,7,\ldots\}$.  Since every divisor of an odd
integer is odd, the divisibility poset $(\mathcal{O}, \mid)$ supports
classical M\"obius inversion, and inverting the zeta-series recovers
$\zeta(k) - 1$ as a M\"obius transform of $h$.

\begin{theorem}[M\"obius inversion]\label{thm:mobius}
For $\operatorname{Re}(k) > 1$,
\begin{equation}\label{eq:mobius}
\zeta(k) - 1 \;=\; \sum_{\substack{d \geq 1 \\ d \;\textup{odd}}}
  \frac{\mu(d)}{d}\,h(dk),
\end{equation}
where the sum converges absolutely.  The tail after the first $N$ odd
values satisfies, for any odd $D$,
\begin{equation}\label{eq:mobiustail}
\biggl|\,\zeta(k) - 1 - \sum_{\substack{1 \leq d \leq D \\
  d \;\textup{odd}}} \frac{\mu(d)}{d}\,h(dk)\biggr|
\;\leq\; \frac{2^{1-(D+2)\operatorname{Re}(k)}}{1 - 2^{-2\operatorname{Re}(k)}}\,.
\end{equation}
\end{theorem}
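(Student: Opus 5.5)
The plan is to substitute the zeta-series of Proposition~\ref{prop:zeta-series} into the right-hand side of \eqref{eq:mobius}, regroup by the product of the two odd indices, and collapse the inner Möbius sum. Proposition~\ref{prop:zeta-series} is stated for integers, but the series $\sum_{m\ge0}[\zeta((2m+1)k)-1]/(2m+1)$ converges for all $\operatorname{Re}(k)>1$. The identity $h(k)=\sum_m[\zeta((2m+1)k)-1]/(2m+1)$ extends to $\operatorname{Re}(k)>1$ directly. Expand $\arctanh(n^{-k})=\sum_m n^{-(2m+1)k}/(2m+1)$ and swap sums, which is legitimate by absolute convergence since $\sum_n\sum_m |n^{-(2m+1)k}|/(2m+1)=h(\sigma)<\infty$ with $\sigma=\operatorname{Re}(k)$. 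So for odd $d$ we have
\[
h(dk)=\sum_{e\ \mathrm{odd}}\frac{\zeta(dek)-1}{e},
\qquad\text{hence}\qquad
\sum_{d\ \mathrm{odd}}\frac{\mu(d)}{d}h(dk)=\sum_{n\ \mathrm{odd}}\frac{\zeta(nk)-1}{n}\sum_{d\mid n}\mu(d),
\]
where $n=de$ and we use that all divisors of odd $n$ are odd. The inner sum is $[n=1]$, which gives $\zeta(k)-1$.

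The rearrangement needs absolute convergence of the double series. I will use the elementary bound $|\zeta(s)-1|\le\sum_{m\ge2}m^{-\operatorname{Re}s}=\zeta(\operatorname{Re}s)-1$. For real $t>1$, integral comparison gives
\[
\zeta(t)-1\le 2^{-t}+\int_2^\infty x^{-t}\,dx=2^{-t}\Bigl(1+\tfrac{2}{t-1}\Bigr).
\]
Hence the double series is dominated by $\sum_{n\ \mathrm{odd}}\tau(n)(\zeta(n\sigma)-1)/n$. This converges geometrically, since $\tau(n)\le n$ and $\zeta(n\sigma)-1=O(2^{-n\sigma})$. The same estimate shows absolute convergence of the Möbius series itself.

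For the tail \eqref{eq:mobiustail}, truncate at odd $D$ and apply the same regrouping. The truncated sum equals $\sum_{n\ \mathrm{odd}}\frac{\zeta(nk)-1}{n}\sum_{d\mid n,\,d\le D}\mu(d)$. For $n\le D$ every divisor of $n$ is at most $D$, so the inner sum is the full sum $[n=1]$. The error is therefore exactly
\[
\sum_{\substack{n\ \mathrm{odd}\\ n\ge D+2}}\frac{\zeta(nk)-1}{n}\sum_{\substack{d\mid n\\ d\le D}}\mu(d).
\]
Bound the inner sum by $\tau(n)\le n$, which cancels the $1/n$. Then bound $|\zeta(nk)-1|\le 2^{-n\sigma}(1+2/(n\sigma-1))\le 2\cdot2^{-n\sigma}$; the last step holds because $n\sigma\ge3\sigma>3$ when $n\ge D+2\ge3$. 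Summing the geometric series over odd $n\ge D+2$, with ratio $2^{-2\sigma}$, gives exactly $2^{1-(D+2)\sigma}/(1-2^{-2\sigma})$.

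The only delicate point is this constant. The crude bound $\tau(n)\le n$ is what makes the stated constant come out cleanly, and the requirement $n\sigma\ge3$ is what yields the factor $2$. I expect no other obstacle; the rest is bookkeeping of the rearrangement.
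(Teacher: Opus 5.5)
Your proof is correct. The algebraic core is exactly the paper's argument: write $h(dk)/d$ as the sum of $(\zeta(nk)-1)/n$ over odd multiples $n$ of $d$, interchange, and collapse $\sum_{d\mid n}\mu(d)=[n=1]$.

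Where you differ is in the analytic estimates. The paper bounds the tail $\sum_{d>D}\mu(d)h(dk)/d$ term by term, using $|\mu(d)/d|\le 1/d$ together with the inequality $h(\sigma)\le 2^{-\sigma}/(1-2^{-2\sigma})$. You instead regroup the \emph{truncated} sum and identify the error exactly as $\sum_{n\ \mathrm{odd},\,n\ge D+2}\frac{\zeta(nk)-1}{n}\sum_{d\mid n,\,d\le D}\mu(d)$. You then bound it using only $\tau(n)\le n$ and the integral estimate $\zeta(t)-1\le 2^{-t}(1+2/(t-1))$.

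Your route is the more reliable one. The paper's inequality for $h(\sigma)$ only controls the single summand $\arctanh(2^{-\sigma})$, and as stated it is false. For example, $h(2)\approx 0.651>4/15$, and $h(\sigma)\ge\zeta(\sigma)-1\to\infty$ as $\sigma\to1^+$. So the paper's convergence and tail argument needs repair. One fix is the correct bound $h(t)\le(\zeta(t)-1)/(1-2^{-2t})$, combined with the slack from $1/d\le 1/3$ for $d\ge 3$. Your argument needs no such patch and produces the stated constant $2^{1-(D+2)\sigma}/(1-2^{-2\sigma})$ exactly.

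You also explicitly justify extending the zeta-series representation from integer $k$ to $\operatorname{Re}(k)>1$, via absolute convergence of the double series $\sum_n\sum_m n^{-(2m+1)k}/(2m+1)$. The paper uses this extension tacitly.
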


\begin{proof}
Write $a(n,k) := (\zeta(nk)-1)/n$ for each $n \in \mathcal{O}$, so that
$h(k) = \sum_{n \in \mathcal{O}} a(n,k)$.  For any odd $d \geq 1$,
$h(dk) = \sum_{n \in \mathcal{O}} (\zeta(dnk)-1)/n$, whence
\[
\frac{1}{d}\,h(dk)
= \sum_{n \in \mathcal{O}} \frac{\zeta(dnk)-1}{dn}
= \sum_{\substack{m \in \mathcal{O} \\ d \mid m}} a(m,k),
\]
where $m = dn$ ranges over the odd multiples of $d$.  Multiplying by
$\mu(d)$ and summing:
\[
\sum_{d \in \mathcal{O}} \frac{\mu(d)}{d}\,h(dk)
= \sum_{m \in \mathcal{O}} a(m,k)
  \sum_{\substack{d \in \mathcal{O} \\ d \mid m}} \mu(d)
= a(1,k) = \zeta(k) - 1,
\]
by the standard identity $\sum_{d \mid m} \mu(d) = [m = 1]$, valid here
because every divisor of the odd integer $m$ is already odd.  The
interchange of summation is justified by the absolute convergence
established below.

\smallskip\noindent
\textit{Absolute convergence.}\;
Set $\sigma = \operatorname{Re}(k) > 1$.  From
$h(\sigma) \leq 2^{-\sigma}/(1 - 2^{-2\sigma})$:
\[
\sum_{d \in \mathcal{O}} \frac{|\mu(d)|}{d}\,|h(dk)|
\;\leq\; \frac{1}{1-2^{-2\sigma}}
  \sum_{d \in \mathcal{O}} \frac{2^{-d\sigma}}{d}
\;<\; \infty.
\]
For the tail after $D$: bounding $|\mu(d)/d| \leq 1/d$ and
$|h(dk)| \leq 2^{-d\sigma}/(1-2^{-2\sigma})$, then summing the
geometric series gives~\eqref{eq:mobiustail}.
\end{proof}

\begin{example}[Ap\'ery's constant via M\"obius inversion]\label{ex:apery-mobius}
Setting $k = 3$ and using $h(3) = \frac{1}{2}\ln(3/2)$:
\[
\zeta(3) = 1 + \tfrac{1}{2}\ln\tfrac{3}{2}
  - \tfrac{1}{3}\,h(9) - \tfrac{1}{5}\,h(15) - \tfrac{1}{7}\,h(21)
  + 0 \cdot h(27) - \tfrac{1}{11}\,h(33) - \cdots,
\]
where $\mu(9) = 0$ annihilates the $d = 9$ term.  Each $h(3d)$ is an
explicit product of gamma values.  Five odd squarefree values of $d$
determine $\zeta(3)$ to within $O(2^{-39}) < 10^{-11}$.
\end{example}

\begin{remark}[Structural parallel]\label{rem:mobius-parallel}
The identity~\eqref{eq:mobius} is the additive counterpart of the
classical M\"obius inversion for the prime zeta function
$P(k) = \sum_{n=1}^{\infty} \mu(n)\ln\zeta(nk)/n$:
\[
\renewcommand{\arraystretch}{1.4}
\begin{array}{c|c|c}
  & \textit{Multiplicative} & \textit{Additive} \\
  \hline
  \text{Compound}
    & \ln\zeta(k)=\displaystyle\sum_{n \geq 1}\frac{\Lambda(n)}{n^k\ln n}
    & h(k)=\displaystyle\sum_{n\in\mathcal{O}}\frac{\zeta(nk)-1}{n} \\[8pt]
  \text{Elementary}
    & P(k)=\displaystyle\sum_p p^{-k}
    & \zeta(k)-1=\displaystyle\sum_{n \geq 2}n^{-k} \\[8pt]
  \text{Inversion}
    & P(k)=\displaystyle\sum_{n \geq 1}\frac{\mu(n)}{n}\,\ln\zeta(nk)
    & \zeta(k)-1=\displaystyle\sum_{d\in\mathcal{O}}\frac{\mu(d)}{d}\,h(dk)
\end{array}
\]
The additive inversion sums over odd integers only, reflecting the
odd-power Taylor expansion of $\arctanh$.
\end{remark}

\section{Structural remarks and open questions}\label{sec:remarks-open}

\begin{proposition}[No functional equation]\label{prop:nofunceq}
There is no meromorphic function $\chi_h(k)$ and no map
$k \mapsto k^*$ such that $h(k^*) = \chi_h(k) \cdot h(k)$ for all~$k$
in a non-empty open set.
\end{proposition}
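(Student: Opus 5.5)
The statement cannot be proved exactly as written. The choice $k^*=k$, $\chi_h\equiv 1$ satisfies it, and more generally $\chi_h:=h(k^*)/h(k)$ is meromorphic for any holomorphic $k^*$ with values in $\operatorname{Re}(k)>0$. I will therefore prove the reading that matches the classical notion of a functional equation. Here $k^*=c-k$ is a reflection with $c\in\mathbb{R}$, and $\chi_h$ is meromorphic on all of $\mathbb{C}$, as $\chi(s)=2^s\pi^{s-1}\sin(\pi s/2)\Gamma(1-s)$ is for $\zeta$. Both sides of the identity are read as meromorphic continuations of $h$. The plan is to show that such an identity would continue $h$ meromorphically into a region that Theorem~\ref{thm:merocont}(d) and Theorem~\ref{thm:noleft} forbid.

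\emph{Step 1: spread the identity.} Suppose $h(c-k)=\chi_h(k)\,h(k)$ on a non-empty open set $U$. Both sides are meromorphic on the connected region $\Omega=\{0<\operatorname{Re}(k)<c\}$ when $c>0$, or on the connected domain of joint definition of the continuation in general. By the identity theorem the relation then holds throughout $\Omega$. If $c\le 0$, the reflection sends $U\subset\{\operatorname{Re}(k)>0\}$ into $\{\operatorname{Re}(k)<0\}$. The hypothesis then already presupposes that $h$ is analytic at some points with negative real part, off the discrete set of singularities of $\chi_h$, and this contradicts Theorem~\ref{thm:noleft} at once. So assume $c>0$.

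\emph{Step 2: manufacture a continuation.} Substitute $k=c-w$ to get
\[
h(w)=\chi_h(c-w)\,h(c-w).
\]
The right-hand side is meromorphic for all $w$ with $\operatorname{Re}(c-w)>0$, that is, $\operatorname{Re}(w)<c$. It agrees with $h$ on the strip $0<\operatorname{Re}(w)<c$. Gluing the two pieces gives a meromorphic function on all of $\mathbb{C}$ extending $h$. In particular this function is meromorphic in a neighborhood of $w=0$, which contradicts Theorem~\ref{thm:merocont}(d): the poles $1/(2m+1)$ accumulate at $0$, so no meromorphic extension exists there. It also contradicts Theorem~\ref{thm:noleft}, since the extension is analytic at generic points with $\operatorname{Re}(w)<0$.

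\emph{Main obstacle.} The only delicate point is that the poles of $h$ near $0$ might be cancelled by zeros of $\chi_h(c-w)$. The contradiction does not need this to be ruled out. Any meromorphic function on a neighborhood of $0$ has isolated poles, while $h$ itself has the poles $1/(2m+1)\to 0$. The glued function equals $h$ on the strip, so it has those same poles accumulating at $0$, and hence it cannot be meromorphic at $0$. The extension to other involutions (Möbius maps $k^*$ sending part of $\{\operatorname{Re}(k)>0\}$ across the imaginary axis) goes the same way. If instead $k^*$ maps into $\{\operatorname{Re}(k)>0\}$, a relation always holds with $\chi_h=h(k^*)/h(k)$, so the proposition must exclude such maps explicitly.
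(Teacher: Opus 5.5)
Your proposal is correct, and it proves something the paper's argument does not. You are right that the statement is false as literally written, since $k^*=k$, $\chi_h\equiv 1$ satisfies it.

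The paper's proof applies $\zeta(s)=\chi(s)\zeta(1-s)$ term by term to the zeta-series. It then observes that $1-(2m+1)k=(2m+1)k^*$ would force $k^*=1/(2m+1)-k$, which depends on $m$. That shows one natural way of transferring the symmetry of $\zeta$ to $h$ fails. It excludes no pair $(\chi_h,k^*)$, so it cannot be a nonexistence proof.

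Your route is structural rather than termwise. You restrict to reflections $k^*=c-k$ and require $\chi_h$ to be meromorphic beyond the strip. The relation then glues $h$ to a function meromorphic near $0$. This contradicts the accumulation of the poles $1/(2m+1)$ from Theorem~\ref{thm:merocont}(d), which the paper does justify. In fact you only need $\chi_h$ meromorphic at the single point $k=c$. For large $m$, $h(c-w)$ is holomorphic at $w=1/(2m+1)$, so $h(w)=\chi_h(c-w)h(c-w)$ forces $\chi_h$ to have poles at $c-1/(2m+1)\to c$. Some such hypothesis is essential, because on the strip alone $\chi_h=h(c-k)/h(k)$ always works. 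The price is generality: you prove nothing about non-reflection maps, and your closing remark on M\"obius maps is only a sketch.

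One adjustment: do not lean on Theorem~\ref{thm:noleft}.
\begin{itemize}
\item For $c\le 0$ the conditions $\operatorname{Re}(k)>0$ and $\operatorname{Re}(c-k)>0$ are incompatible. So no open set $U$ exists and the case is vacuous.
\item The appeal to that theorem at the end of Step 2 is redundant.
\item Its proof in the paper is not sound. It infers non-continuability from the divergence of $\sum_m\zeta((2m+1)k_0)/(2m+1)$, but divergence of one series representation does not rule out continuation of the function.
\item As stated, the theorem would also settle the paper's own open question about continuation to the imaginary axis. Any continuation to a neighborhood of a point $it_0$ reaches points with negative real part.
\end{itemize}
Your contradiction via (d) stands on its own.
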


\begin{proof}
The zeta-series $h(k) = \sum[\zeta((2m+1)k)-1]/(2m+1)$ involves $\zeta$
at the arguments $(2m+1)k$ for $m = 0, 1, 2, \ldots$.  The functional
equation for $\zeta$ reflects each argument via
$\zeta((2m+1)k) = \chi((2m+1)k)\,\zeta(1-(2m+1)k)$.  For the reflected
arguments $1-(2m+1)k$ to be odd multiples of a single value~$k^*$, we
would need $1 - (2m+1)k = (2m+1)k^*$ for all~$m$, giving
$k^* = 1/(2m+1) - k$, which depends on~$m$.  No single $k^*$ works
simultaneously for all terms.
\end{proof}

\subsection*{Open questions}

\begin{enumerate}
\item Does $\phi$ extend holomorphically past $\operatorname{Re}(k)=0$?
  More broadly, does $h$ admit analytic continuation along non-real paths
  to points on the imaginary axis?

\item Are the finite parts $A_m$ for $m \geq 1$ (involving $\zeta$ at
  rational arguments in $(0,1)$) transcendental?

\item Is $r_j = \zeta(2j)^2/\zeta(4j)$ monotonically decreasing
  to~$1$?  Does $\ln r_j$ admit a clean asymptotic expansion in~$j$?

\item The $\pi$-cancellation in the $\zeta$ case (Section~\ref{sec:hpvalues})
  has a natural extension: can one show that
  $h_p(2j, \chi) = \frac{1}{2}\ln r_j(\chi)$ with $r_j(\chi)$ algebraic
  for quadratic characters~$\chi$, yielding unconditional transcendence
  of $L$-function combinations at all even integers?

\end{enumerate}

\bigskip\noindent
2020 \textit{Mathematics Subject Classification}: Primary 11M06;
Secondary 11J81, 33B15, 40A25.

\medskip\noindent
\textit{Keywords}: inverse hyperbolic tangent, infinite products, Riemann zeta
function, Euler--Mascheroni constant, gamma function, transcendence,
cyclotomic polynomials, meromorphic continuation, Mittag-Leffler decomposition,
Dirichlet lambda function, prime zeta function, Euler product, $L$-functions,
M\"obius inversion, Hadamard product, nontrivial zeros.

\end{document}